\def\red{\textcolor{red}}
\theoremstyle{definition}
\newtheorem{thm}{Theorem}
\newtheorem{prop}[thm]{Proposition}
\newtheorem{prob}[thm]{Problem}
\begin{document}

\title{Spanning plane subgraphs of $1$-plane graphs}

\author{
Kenta Noguchi\thanks{Department of Information Sciences, Tokyo University of Science, 2641 Yamazaki, Noda, Chiba 278-8510, Japan.
E-mail address: {\tt noguchi@rs.tus.ac.jp}},
Katsuhiro Ota\thanks{Department of Mathematics, Keio University, 3-14-1 Hiyoshi, Kohoku-ku, Yokohama 223-8522, Japan.
E-mail address: {\tt ohta@math.keio.ac.jp}},
and Yusuke Suzuki\thanks{Department of Mathematics, Niigata University, 8050 Ikarashi 2-no-cho, Nishi-ku, Niigata 950-2181, Japan. 
E-mail address: {\tt y-suzuki@math.sc.niigata-u.ac.jp}}
}

\date{}
\maketitle

\noindent
\begin{abstract}

A graph drawn on the plane is called $1$-plane if each edge is crossed at most once by another edge. 
In this paper, we show that every $4$-connected $1$-plane graph has a connected spanning plane subgraph. 
We also show that there exist infinitely many $4$-connected $1$-plane graphs that have no $2$-connected spanning plane subgraphs. 
Moreover, we consider the condition of $k$ and $l$ such that every $k$-connected $1$-plane graph has an $l$-connected spanning plane subgraph.
\end{abstract}

\noindent
\textbf{Keywords.}
$1$-planar graph, 
$1$-plane drawing, 
spanning plane subgraph, 
connectivity

\section{Introduction}
\label{introsec}
For a given graph $G$, the problem to find a (spanning) subgraph of $G$ with certain properties is natural and practical, and it receives much attention in the literature; for example, bipartite subgraphs, regular subgraphs, highly connected subgraphs, and so on. 
Among others, we focus on finding planar subgraphs from a given graph. 
In the past, the thickness of a graph $G$, which is the least number $k$ such that the edge set $E(G)$ can be partitioned into $k$ planar graphs, are widely studied; see \cite{MOS} for a survey. 
In 2020, Fabrici et al.~\cite{FHMMSZ} studied the existence of planar subgraphs of $1$-planar graphs. 
Here a graph is \emph{$1$-planar} if it has a drawing on the plane so that each edge is crossed at most once by another edge. 
This class of graphs is well studied; see \cite{FHMMSZ,HMS,PT,Su}. 
One of their main theorems states that every $3$-connected locally maximal $1$-planar graph has a $3$-connected planar spanning subgraph, where a $1$-planar graph $G$ is locally maximal if there exists a $1$-plane drawing such that, for every pair of two crossing edges, their end vertices induce the complete graph $K_4$ \cite[Theorem 5]{FHMMSZ}. 
In the present paper, we consider its analog for \emph{plane} spanning subgraphs of graphs already drawn on the plane. 
The first and the third author have already dealt with one of such problems in \cite{NS}, namely, proved that every optimal $1$-plane graph $G$ has a $4$-connected spanning triangulation, where a $1$-planar graph $G$ of order $n (\ge 3)$ is optimal if $|E(G)| = 4n-8$; it is shown in \cite{PT} that every $1$-planar graph of order $n (\ge 3)$ has at most $4n-8$ edges. 
As a consequence, $G$ is hamiltonian by well-known Tutte's theorem~\cite{Tu1} stating that every $4$-connected planar graph is hamiltonian. 
This gave another proof of hamiltonicity of optimal $1$-planar graphs, which have been originally shown by Hud\'{a}k, Madaras, and the third author~\cite{HMS}. 

The main problem of this paper is, for a given $1$-plane graph $G$, to find a highly connected spanning subgraph of $G$ which is a plane graph with respect to the drawing of $G$. 
We call it a \emph{spanning plane subgraph} hereafter. 
It is natural to expect that the high connectivity of $G$ guarantees the existence of a highly connected spanning plane subgraph. 
Here the \emph{connectivity} $\kappa(G)$ and the \emph{edge-connectivity} $\kappa'(G)$ of a (multi)graph $G$ are defined by: 
\begin{eqnarray*}
\kappa(G) &:=& \min\{p(u, v) \mid u, v \in V(G), u\ne v\} ~\mbox{and} \\
\kappa'(G) &:=& \min\{p'(u, v) \mid u, v \in V(G), u\ne v\},
\end{eqnarray*}
respectively, where $p(u, v)$ is the maximum number of pairwise internally disjoint (vertex-disjoint) $uv$-paths and $p'(u, v)$ is the maximum number of pairwise edge-disjoint $uv$-paths. 
By the maximum size $4n-8$ of a 1-planar graph $G$, $G$ has a vertex of degree at most $7$ and $G$ cannot be $8$-connected. 
We show the following theorem. 

\begin{thm} 
\label{thm1}
Every $4$-connected $1$-plane graph has a connected spanning plane subgraph. 
\end{thm}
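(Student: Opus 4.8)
The plan is to reduce the theorem to a purely combinatorial statement about spanning trees. Since each edge is crossed at most once, distinct crossing pairs are edge-disjoint, so a subgraph of $G$ is plane (with respect to the given drawing) exactly when it contains at most one edge from each crossing pair. Hence it suffices to produce a spanning tree $T$ of $G$ that meets every crossing pair in at most one edge: such a $T$ is connected, spanning, and plane. Writing the crossing pairs as $\{e_1,f_1\},\dots,\{e_m,f_m\}$, this is the same as choosing a transversal $D$ (one edge from each pair) with $G-D$ connected. Because $G$ is $4$-connected it is $4$-edge-connected, so every edge cut of $G$ has at least four edges; since $D$ consists only of crossing edges and contains at most one edge of each pair, $G-D$ can fail to be connected only if some edge cut of $G$ consists entirely of crossing edges, no two of which cross each other. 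So the real task is to route around such cuts.

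For the main argument I would take a spanning tree $T$ of $G$ minimizing the number of crossing pairs it contains entirely, and suppose for contradiction that this number is positive. Fix a crossing pair $\{e,f\}\subseteq T$ with $e=ab$, and let $A$ be the side of $T-e$ containing $a$; then $\partial_G(A)$ is an edge cut with at least four edges, one of which is $e$. For every other edge $g$ of $\partial_G(A)$, the graph $T-e+g$ is again a spanning tree; by minimality it must contain a full crossing pair, and since deleting $e$ destroyed $\{e,f\}$, a short case check shows that $g$ is necessarily a crossing edge whose crossing partner already lies in $T$. Thus $\partial_G(A)$ consists of $e$ together with crossing edges whose partners lie in $T$, and symmetrically for the fundamental cut of $f$. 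The concluding step is to confront this rigid combinatorial picture with the $1$-plane drawing: one would use the cyclic arrangement of the four endpoints of $\{e,f\}$ around their crossing, the side of the cut of $e$ on which $f$ lies, and the way the remaining (mutually non-crossing) cut edges must be drawn in the plane, in order to extract a separating set of size at most three and contradict $4$-connectivity.

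I expect this last step — extracting a small separator from the planar drawing, given the rigid cut structure — to be the crux; it appears to require a careful analysis of how the cut edges, being pairwise non-crossing crossing edges whose partners sit inside $T$, can be arranged around $A$ and $B$ in the plane, and this is where the $1$-planarity is genuinely needed (a corresponding statement for general drawings, or even for $3$-connected $1$-plane graphs, should be false). As a possible alternative to the extremal argument, one can start from two edge-disjoint spanning trees $T_1,T_2$ (which exist because $G$ is $4$-edge-connected) and repair the crossing pairs lying entirely inside $T_1$ by symmetric exchanges with $T_2$, tracking a potential that counts the pairs contained in $T_1$ or in $T_2$; the recurring difficulty there is that a repair can create a new bad pair, so one again has to invoke the drawing to show that a suitably chosen sequence of exchanges terminates.
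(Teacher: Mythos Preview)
Your reduction to finding a spanning tree that meets every crossing pair in at most one edge is correct, and the extremal setup (minimize the number of fully contained crossing pairs) together with the fundamental-cut swap is sound as far as it goes: if $\{e,f\}\subseteq T$ and $g\in\partial_G(A)\setminus\{e\}$, then indeed $T-e+g$ forces $g$ to be a crossing edge with partner $g'\in T\setminus\{e\}$. But the proof stops exactly where the work begins. The structure you have extracted --- every non-$e$ edge of the cut is a crossing edge whose partner lies somewhere in $T$ --- does not by itself produce a separator of size at most three: the cut $\partial_G(A)$ can be large, the partners $g_i'$ can be scattered throughout $T$ on either side, and ``confronting this with the $1$-plane drawing'' is precisely the whole problem. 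You flag this step as the crux and then do not carry it out; the Nash--Williams alternative has the same status, since you name the obstacle (an exchange can create a new bad pair) but give no termination argument. As written, this is a plan with the essential lemma missing.

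The paper's proof is organized quite differently and avoids the separator hunt altogether. It does not look for a spanning tree; it takes a spanning \emph{plane} subgraph $H$ with the minimum number of components $C_1,\dots,C_l$, enlarges each $C_i$ to a maximal $\tilde C_i$ subject to (ii) no edge of $\tilde C_i$ crosses any edge of $\tilde C_j$ for $j\ne i$ and (iii) deleting any non-cut edge of $\tilde C_i$ still leaves a connected plane spanning subgraph of $V(C_i)$, and calls the components of $\bigcup_i(\tilde C_i\setminus A_i)$ \emph{blocks}, where $A_i$ is the set of cut edges of $\tilde C_i$. The key claim is purely local: every edge of $G\setminus\tilde H$ joining two different blocks must cross a cut edge of some $\tilde C_k$; this is proved by a short case analysis using (ii), (iii), and the maximality of the $\tilde C_i$. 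The contradiction is then a double count: the edges of $G$ between blocks number at most $2\sum_i|A_i|$ (the cut edges themselves plus at most one edge crossing each), while $4$-edge-connectivity gives $d(B)\ge 4$ for every block and there are $\sum_i(|A_i|+1)$ blocks, so $\tfrac12\sum_B d(B)\ge 2\sum_i(|A_i|+1)>2\sum_i|A_i|$. Note that only $4$-edge-connectivity is used, so no small vertex separator is ever exhibited; your intended endgame would prove a genuinely weaker statement even if it could be completed.
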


On the other hand, we also show the following theorems.

\begin{thm} 
\label{thm2}
There exist infinitely many $3$-connected $1$-plane graphs that have no connected spanning plane subgraphs. 
\end{thm}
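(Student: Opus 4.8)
We shall construct, for every large enough integer $m$, a $3$-connected $1$-plane graph $G_m$ with no connected spanning plane subgraph; taking the $G_m$ pairwise non-isomorphic (say with $|V(G_m)|$ strictly increasing) then gives infinitely many. The plan rests on a reformulation of the obstruction. In a $1$-plane drawing the crossed edges split into pairwise disjoint \emph{crossing pairs} (unordered pairs of edges that cross each other), and an edge crossed by nothing may be added to any plane subgraph without harming planarity or connectivity. Hence an edge-maximal connected spanning plane subgraph, if it exists, is obtained by taking all non-crossed edges together with exactly one edge of each crossing pair. Calling a set that picks exactly one edge from each crossing pair a \emph{transversal} $D$, we obtain
\[
  G \text{ has a connected spanning plane subgraph}
  \quad\Longleftrightarrow\quad
  G-D \text{ is connected for some transversal } D .
\]
It therefore suffices to produce a $3$-connected $1$-plane graph $G_m$ such that $G_m-D$ is disconnected for \emph{every} transversal $D$.

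Being $3$-connected, $G_m$ is $3$-edge-connected, so ``$G_m-D$ disconnected'' forces $D$ to contain a bond (minimal edge cut) of $G_m$, and that bond must consist of edges taken from three distinct crossing pairs with $D$ picking exactly those edges. A single bond of size three is contained in only a $2^{-3}$ fraction of all transversals; moreover two distinct such bonds can share at most one edge, for if two of them shared two edges then their symmetric difference would be a two-edge element of the cut space, contradicting $3$-edge-connectivity. Hence $G_m$ must carry at least eight size-three bonds, pairwise intersecting in at most one edge, each of the form $\{e_i,e_j,e_k\}$ with $e_i$ belonging to the $i$-th crossing pair for three distinct pairs, and jointly covering all transversals. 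In particular the design must defeat the \emph{uniform} transversals — those that consistently keep the same side of every crossing pair — which are the usual escape route. Engineering such a family is the combinatorial core of the argument.

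We will realize it by a pinwheel scheme on a twisted cyclic skeleton: fix a small $3$-connected plane gadget $H$ carrying a few marked edges, glue $m$ copies $H_1,\dots,H_m$ cyclically with a twist in the last identification so that the induced system of crossing constraints is frustrated (admitting no globally consistent resolution that keeps the whole graph connected), and then route the new edges across the skeleton in a rotational fashion so that each is crossed exactly once and the crossing pairs are matched to the marked edges exactly as the combinatorial requirement above dictates. With $G_m$ in hand we check, in order: (i) the drawing is $1$-plane; (ii) $G_m$ is $3$-connected — via three internally disjoint paths between any two vertices — but not $4$-connected, the latter being unavoidable since by Theorem~\ref{thm1} a $4$-connected $1$-plane graph does have a connected spanning plane subgraph; and (iii) for an arbitrary transversal $D$, the edges $D$ selects on some three crossing pairs form one of the marked size-three bonds, so $G_m-D$ is disconnected.

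The difficulty is in meeting all of (i)--(iii) at once. The bond design pushes $G_m$ toward a highly symmetric, edge-rich shape, yet the connectivity must stay \emph{exactly} $3$ — a $4$-connected example is flatly impossible by Theorem~\ref{thm1} — and the whole crossing pattern must be drawable $1$-planarly, each added edge crossing exactly one other. The truly delicate ingredient is killing the uniform transversals: in a straightforward cyclic construction, keeping one side of every crossing pair leaves a connected spanning subgraph, so the gluing of the copies of $H$ must be ``twisted'', and making this frustration coexist with $3$-connectivity and $1$-planarity is where the real work of the proof lies.
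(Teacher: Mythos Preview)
Your proposal has a genuine gap: you never actually construct the graphs $G_m$. You give a correct reformulation via transversals of the crossing pairs, you analyse what a construction must achieve (every transversal must contain a size-$3$ bond), and you describe in outline a ``pinwheel scheme'' of gadgets glued cyclically with a twist --- but you then write that ``making this frustration coexist with $3$-connectivity and $1$-planarity is where the real work of the proof lies,'' and you stop there. What you have submitted is a research plan, not a proof: no gadget $H$ is exhibited, no gluing is specified, and none of (i)--(iii) is verified.

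The route you sketch is also harder than needed, and it is worth seeing why. The paper does not try to cover all $2^r$ transversals by a carefully designed family of size-$3$ bonds; it wins by a straight count. Take a $3$-edge-connected cubic plane graph on $n\ge 8$ vertices (concretely the prism over $C_{2k}$, so $n=4k$), inflate each vertex to a triangle, and apply a crossing operation along suitable even cycles and length-$2$ paths of the base graph so that \emph{every} inter-triangle edge becomes a crossing edge. Proposition~\ref{prop_k-conn} yields $3$-connectivity of the resulting $1$-plane graph. There are $3n/2$ inter-triangle edges, hence $3n/4$ crossing pairs, so any plane subgraph keeps at most $3n/4$ inter-triangle edges; but connecting $n$ pairwise disjoint triangles requires at least $n-1$ such edges, and $3n/4 < n-1$. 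In your transversal language this says every transversal leaves too few bridges between the blobs, with no need to locate specific bonds. The ``frustration'' you worry about --- defeating the uniform transversals --- is absorbed automatically once all inter-gadget edges are crossed, so the delicate twisted-gluing design you set out to perform is unnecessary.
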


\begin{thm} 
\label{thm3}
There exist infinitely many $4$-connected $1$-plane graphs 
that have no $2$-connected spanning plane subgraphs. 
\end{thm}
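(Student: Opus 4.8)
The plan is to produce, for every integer $N$, a $4$-connected $1$-plane graph on more than $N$ vertices all of whose spanning plane subgraphs fail to be $2$-connected. The first step is a reduction. Adding edges to a $2$-connected graph on a fixed vertex set keeps it $2$-connected, so $G$ has a $2$-connected spanning plane subgraph if and only if it has an edge-maximal one; and since each edge of a $1$-plane graph is crossed at most once, the crossing relation is a matching on the edge set, so an edge-maximal spanning plane subgraph is exactly what one obtains by keeping all uncrossed edges and choosing one edge from each crossing pair. Hence it suffices to arrange that, for every such choice (a ``resolution'' of the crossings), the resulting graph has a cut vertex.

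To certify such a cut vertex I would use the following device, which I will call a \emph{breaking pair}: a vertex $w$ together with an edge set $F$ that is an edge cut of $G-w$, say separating $V(G)\setminus\{w\}$ into nonempty parts $A$ and $B$. If a spanning plane subgraph $H$ contains no edge of $F$, then $H-w$ has no $A$--$B$ edge, so $H-w$ is disconnected and $H$ is not $2$-connected. Now $G$ is $4$-connected, so $G-w$ is $3$-connected and $F$ has at least three edges; and for $H$ to avoid $F$ at all, these must be \emph{crossed} edges lying in three distinct crossing pairs, with $H$ resolving each of those pairs ``the wrong way''. In particular $G$ must have at least three crossing pairs, and to defeat every resolution it is enough to provide a family of breaking pairs whose associated wrong-resolution patterns together cover all resolutions. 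In the tightest version this uses exactly three crossing pairs $P_1,P_2,P_3$ and, for each of the eight resolutions $\varepsilon$, a breaking pair $(w_\varepsilon,F_\varepsilon)$ in which $F_\varepsilon$ is the set of the three edges that $\varepsilon$ deletes.

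The construction task is then concrete: draw a $4$-connected $1$-plane graph with three crossing pairs $P_1,P_2,P_3$, each crossing exactly once and creating no other crossing, so that for every resolution $\varepsilon$ the three deleted edges form an edge cut of $G-w_\varepsilon$ for some vertex $w_\varepsilon$. I would first try to take $w_\varepsilon=w$ a single fixed vertex, attaching to $w$ eight ``pockets'' $D_\varepsilon$ and meshing the pockets with the six edges of $P_1,P_2,P_3$ so that $D_\varepsilon$ reaches the rest of the graph only through $w$ and through the three edges that $\varepsilon$ deletes; if a single $w$ proves too rigid, one allows the $w_\varepsilon$ to vary. Note that each edge of a crossing pair then has to be a boundary edge of exactly four of the pockets, so the pockets must overlap in a prescribed pattern, and the region around $w$ has to be filled in so that all of this is consistent with a plane drawing. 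For the infinitude one finally either inflates the pockets into arbitrarily large, internally highly connected blocks, or embeds one copy of a fixed-size gadget into arbitrarily large $4$-connected $1$-plane host graphs.

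The main obstacle is exactly the tension that the theorem records: making every resolution disconnect forces a thin spot into each resolved graph, whereas $4$-connectivity forbids any thin spot in $G$ itself. This is why the pockets cannot all be single vertices (a single crossed edge would have to be incident to four different singleton pockets), why they must overlap, and why a genuinely intricate gadget seems unavoidable. Accordingly, the heaviest part of the write-up will be verifying $4$-connectivity of the final graphs --- excluding every $3$-element vertex cut, with special attention to triples containing $w$ and to triples of endpoints of the crossing edges --- while the $1$-planarity of the drawing and the covering property of the breaking pairs should follow directly from the construction.
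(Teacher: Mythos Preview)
Your proposal is not a proof but an outline of a strategy, and the central step---the actual construction of a $4$-connected $1$-plane graph with the required property---is entirely missing. The reduction to edge-maximal spanning plane subgraphs and the breaking-pair device are sound, but from that point on you only list constraints that a hypothetical example would have to satisfy (three crossing pairs, eight overlapping pockets around a vertex $w$, each crossing edge serving as a boundary edge for four different pockets), without exhibiting any graph that meets them or arguing that such a graph exists. You yourself flag the verification of $4$-connectivity as ``the heaviest part of the write-up''; nothing in the proposal indicates how to carry it out, or even why the eight-pocket picture is realizable as a $1$-plane drawing. As it stands this is a wish list, not a proof.

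The paper's argument is explicit and uses a completely different mechanism. It starts from any $3$-edge-connected cubic planar graph $G$ with no Hamiltonian cycle (infinitely many exist). A proper $3$-edge-colouring---guaranteed by the Four Colour Theorem via Tait---yields a perfect matching $M$ and a $2$-factor of even cycles; doubling the edges of $M$ gives a $4$-regular $4$-edge-connected plane multigraph $G_M$. Applying the inflation-plus-gadget machinery of Section~2 and a crossing operation along the parallel pairs and the even cycles produces a $4$-connected $1$-plane graph in which \emph{every} edge coming from $G_M$ is crossed. At each $4$-gadget a $2$-connected spanning plane subgraph must use exactly two of the four outgoing (crossed) edges, so any such subgraph corresponds to a $2$-factor of $G$; since $G$ is non-Hamiltonian, that $2$-factor---and hence the subgraph---is disconnected. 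This global Hamiltonicity obstruction replaces the delicate local gadgetry your plan calls for and delivers infinitely many examples at once.
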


\begin{thm} 
\label{thm4}
There exist infinitely many $6$-connected $1$-plane graphs that have no $3$-connected spanning plane subgraphs. 
\end{thm}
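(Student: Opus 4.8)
\textit{Proof approach.} The plan is to produce, for every $k$ above some threshold, an explicit $6$-connected $1$-plane graph $G_k$ in which no spanning plane subgraph is $3$-connected; letting $k$ range then gives infinitely many examples. I will build $G_k$ from $k$ copies of one fixed small ``gadget'' (together with a fixed bounded ``core''), combined in a structured, cyclic fashion, so that the crossing pairs of $G_k$ behave as a shared resource forced to serve a cyclically arranged family of candidate separating pairs $S_1,\dots,S_k$.

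The first step is to reduce the assertion to a statement about how the crossings are resolved. Because $G_k$ is $1$-plane, each edge is crossed at most once, so the pairs of mutually crossing edges are pairwise disjoint; hence deleting exactly one (freely chosen) edge from each crossing pair already yields a plane graph, and every spanning plane subgraph of $G_k$ is a spanning subgraph of one obtained this way. Since a vertex cut of size at most $2$ in a graph on at least four vertices remains a vertex cut of the same size in every spanning subgraph on the same vertex set, it is enough to show: \emph{for every choice of one edge to keep from each crossing pair, the resulting plane graph $H$ has a vertex cut of size at most $2$}, because then $H$, and a fortiori every spanning plane subgraph of $G_k$, fails to be $3$-connected.

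The second step is the verification that $G_k$ is $6$-connected; here I would exploit the cyclic automorphism of $G_k$, so that any hypothetical separator of size at most $5$ can, by the gadget structure, meet only a bounded number of consecutive copies, and then one checks by a finite case analysis that within a bounded window of gadgets together with the core there are always six internally disjoint paths between any two vertices. The third, and central, step is the combinatorial lemma that every resolution of the crossings activates one of the $S_j$. The gadget is to be designed so that its two possible resolutions of a crossing ``point'' an obligation toward $S_{i-1}$ or toward $S_i$, with the property that whenever the obligation is not met at a joint, the pair sitting there is a genuine cut of $H$; since the gadgets sit on a cycle, these obligations cannot all be met at once, so some $S_j$ separates $H$ for every choice.

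The hard part is constructing a single fixed gadget meeting two opposing demands simultaneously: it must be ``fragile'' enough that any way of resolving its crossings loses a $2$-separation's worth of connectivity at one of its joints, yet ``robust'' enough that, before resolution, $G_k$ is $6$-connected; and, most delicately, one must confirm that the pinch at $S_j$ disconnects $H$ globally rather than being bypassed through the remaining gadgets or through the core. Pinning down such a gadget, and doing the careful bookkeeping of the edges that run between consecutive gadgets and into the core in both the connectivity check and the separation argument, is where essentially all of the work is concentrated.
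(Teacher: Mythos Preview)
What you have written is a strategy outline, not a proof: the central object---the gadget---is never constructed, and you say yourself that ``pinning down such a gadget \dots\ is where essentially all of the work is concentrated.'' Until that gadget exists and its properties are verified, nothing has been proved. Worse, your cycle-obligation heuristic is underdetermined. If ``the obligation is met at $S_j$'' means that \emph{at least one} of the two neighbouring gadgets points toward $S_j$, then the all-clockwise (or all-counterclockwise) assignment meets every obligation simultaneously, and your parity claim is false. The argument only goes through if each joint needs \emph{both} neighbours pointing to it to avoid being a cut; you never say this, and in any case it is a nontrivial design constraint on a gadget you have not built. The ``core'' is likewise an unexplained placeholder whose role in both the connectivity verification and the separation argument is left entirely open.

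By contrast, the paper's proof is short and completely explicit. Start from an even cycle $C_{2k}$, triple every edge to obtain a $6$-regular $6$-edge-connected plane multigraph $I_{2k}$, and then apply the inflation/$6$-gadget/crossing-operation machinery already set up in Section~\ref{sec:def}; Propositions~\ref{prop_1-planarity} and~\ref{prop_k-conn} immediately give $1$-planarity and $6$-connectivity, with no window analysis or case checking needed. For the separation, the crossings are arranged so that the six external edges at the gadget replacing $v_2$ form three crossing pairs, hence any plane spanning subgraph keeps at most three of them---and in fact at most one toward one side; the same holds at $v_4$. Choosing the two ``thin'' sides yields a $2$-edge cut (and hence a vertex cut of size at most $2$) for every resolution. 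There is no core, no obligation bookkeeping, and the argument already works for $k=2$. If you want to salvage your approach, the concrete lesson is that a direct local count at two well-chosen vertices can replace the global cycle-pigeonhole you were reaching for.
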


This paper is organized as follows. 
In Section \ref{sec:def}, we define some notations and prepare useful tools. 
In Section \ref{sec:proof}, we show the main theorems. 
In the last section, we consider the sharpness of the connectivity on theorems above and we conclude remarks.
Throughout the paper, the \emph{minimum degree} and the \emph{maximum degree} of a (multi)graph $G$ are denoted by $\delta(G)$ and $\Delta(G)$, respectively.

\section{Drawing of a graph}
\label{sec:def}
For terms not defined here, see the standard textbook, e.g., \cite{BM}. 
We consider finite undirected graphs. 
Otherwise noted, a graph is assumed to be simple, while \emph{multigraph} can have parallel edges (but no loops). 
Let $G$ be a multigraph. 
A plane \emph{drawing} of $G$ is an image of a continuous map from $G$ to the plane; the vertices are distinct points and the edges are non-self-intersecting arcs, containing no vertex points in the interior, which join two points corresponding to their end vertices. 
A drawing is an \emph{embedding} if its map is injective, that is, there is no \emph{crossing point}. 
A multigraph $G$ is \emph{planar} if $G$ has a plane embedding. 
A graph $G$ is \emph{$1$-planar} if it has a plane drawing such that each edge has at most one crossing point. 
We often consider that a given $1$-planar graph $G$ is already drawn on the plane as such, and we denote its image by $G$ itself to simplify the notation. 
In this case, we say that $G$ is a \emph{$1$-plane graph}.
In the drawing of a graph $G$, the edge set can be divided into \emph{crossing edges} and \emph{non-crossing edges}. 
Any $1$-plane drawing forbids crossings of adjacent edges. 

An embedding of a planar multigraph (or a graph embedded on orientable surfaces in general) $G$ is represented by the so-called \emph{rotation system}, which is the set of the \emph{rotation} of the verices $v_i$ where the rotation of $v_i$ is a cyclic ordering of the incident edges given by the clockwise one appearing near by $v_i$, see \cite{GT, MT} for the detail. 
For a plane multigraph $G$, two adjacent edges $e_1 = vu$ and $e_2 =vw$ are \emph{consecutive} if $e_1$ and $e_2$ appear consecutively at the rotation of a vertex $v$. 
Note that a $1$-plane drawing of a $1$-planar graph $G$ can also be represented by a rotation system if we regard crossing points as (dummy) vertices, see \cite{Su} for example. 

We define three operations which are used in our proof of Theorems 2--4. 
The first operation, namely, the inflation, was originally defined by Fleischner and Jackson~\cite{FJ}. 

\medskip
\noindent
\textbf{Inflation and attaching gadgets}

Let $G$ be a multigraph with $V(G) = \{v_1, v_2, \ldots, v_n\}$ with $\delta(G)\geq 3$. 
We denote the edge joining $v_i$ and $v_j$ of $G$ by $e_{ij}$; note that $e_{ij} = e_{ji}$ (when $G$ has parallel edges, we consider the edge set as a multiset). 
Furthermore, we suppose that each vertex $v_i$ has a cyclic ordering of the incident edges of $v_i$. 
We denote the given cyclic orderings to all the vertices of $G$ by $\rho$, and denote the composite structure 
$G$ with $\rho$ by $G_{\rho}$. 
We define a new graph $(G_{\rho})_{\textrm{infl.}}$, so-called the \emph{inflation} of $G_{\rho}$ with 
\[
V((G_{\rho})_{\textrm{infl.}}) = \bigcup_{e_{ij} \in E(G)} \{v_{ij}, v_{ji} \} ~\mbox{and}
\]
\[
E((G_{\rho})_{\textrm{infl.}}) = \{v_{ij}v_{ji} \mid e_{ij} \in E(G) \} \cup 
\{v_{ip}v_{iq} \mid e_{ip}, e_{iq} \in E(G)~\textrm{are consecutive around $v_i$ in}~\rho \}. 
\]
The inflation $(G_{\rho})_{\textrm{infl.}}$ is simple and cubic. 
We also refer the operation above as an inflation applied to $G_{\rho}$. 
In contrast to the notation of edges, note that $v_{ij}$ and $v_{ji}$ are distinct vertices in $(G_{\rho})_{\textrm{infl.}}$, which are intuitively regarded as ``two ends of $e_{ij} \in E(G)$''. 
Note that in $(G_{\rho})_{\textrm{infl.}}$, each vertex $v$ of $G$ is replaced with a cycle $C_v$ of length $\deg(v)$, 
where the corresponding end vertices of incident edges to $v$ appear on $C_v$ in the order with respect to $\rho$. 
The $C_v$ above is called an {\it inflated cycle\/} of $v$. 
Conversely, when we shrink each inflated cycle of $(G_{\rho})_{\textrm{infl.}}$ to a vertex, we naturally obtain $G$ (or $G_{\rho}$). 
Note that edges joining different inflated cycles 1-to-1 correspond to the edges of $G$. 

In our latter arguments, we usually discuss a graph $G$ already drawn (or embedded) on the plane. 
As we mentioned, each vertex $v$ of $G$ has a cyclic ordering, which is of the rotation of $v$.  
In this case, we consider that ``$G$'' means the composite structure mentioned above (without the subscript $\rho$), 
and we simply denote the inflation obtained from $G$ by $G_{\textrm{infl.}}$. 
We sometimes refer $G_{\textrm{infl.}}$ obtained from $G$ drawn on the plane as a \emph{canonical} inflation. 
The drawing of the canonical inflation $G_{\textrm{infl.}}$ is naturally obtained from $G$, that is, each vertex $v$ of $G$ is replaced with a small inflated cycle $C_v$ around the location of $v$, and edges corresponding to those incident to $v$ is placed radially from $C_v$ keeping its ordering. 
In this case, if we shrink each inflated cycle of $G_{\textrm{infl.}}$ to a vertex, we naturally obtain $G$ drawn on the plane. 

Let $G$ be a graph with $\delta(G) \ge 3$ and $\Delta(G) \le 7$, and $(G_{\rho})_{\textrm{infl.}}$ be an inflation of $G_{\rho}$ for some $\rho$. 
We define a graph $(G_{\rho})_{\textrm{infl.}}^{+}$ obtained from $(G_{\rho})_{\textrm{infl.}}$ 
by attaching, to each inflated cycle except for triangles, the graph shown in Figure \ref{gadget}, which we call \emph{$k$-gadgets}. 
For example, for an inflated cycle $C$ of length $5$, we identify $C$ and the \emph{outer} $5$-cycle of the (copy of) second graph in Figure~\ref{gadget} facing the outer region. 
Similar to the notation above, if $G$ is a graph drawn on the plane, then we simply use the notation like $G_{\textrm{infl.}}^{+}$

Now we prepare the following propositions; the first one is straightforward by the arguments above. 

\begin{prop}
\label{prop_1-planarity}
If $G$ is a plane multigraph (resp. $1$-plane graph) with $\delta(G) \ge 3$, then the canonical inflation $G_{\textrm{infl.}}$ has a plane embedding (resp. $1$-plane drawing). 
If $G$ is a $1$-plane graph with $\delta(G) \ge 3$ and $\Delta(G) \le 7$, 
then $G_{\textrm{infl.}}^{+}$ is also a $1$-plane graph. 
\end{prop}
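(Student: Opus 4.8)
The plan is to read off both statements directly from the construction of the canonical inflation, so the argument is essentially topological bookkeeping. For the first assertion I would start from a given drawing of $G$ on the plane (a plane embedding, or a $1$-plane drawing) and choose, for each vertex $v$, a closed disk $D_v$ centred at $v$ so small that the disks $D_v$ are pairwise disjoint, that $D_v$ meets only the edges incident to $v$ and meets each of them in a single arc issuing from $v$, and that $D_v$ contains no crossing point of the drawing; the last condition can be met because crossing points lie in the interiors of edges and hence at positive distance from every vertex. Inside $D_v$ I place the inflated cycle $C_v$ close to $\partial D_v$, with its $\deg(v)$ vertices in the cyclic order given by the rotation of $v$, and I route the edges of $C_v$ inside $D_v$; as $D_v$ contains nothing but the stubs of the incident edges, this creates no crossing. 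For each edge $e_{ij}=v_iv_j$ of $G$ I draw the corresponding edge $v_{ij}v_{ji}$ of $G_{\textrm{infl.}}$ as a parallel copy of the portion of $e_{ij}$ outside $D_{v_i}\cup D_{v_j}$, taken close enough to the original. Two such edges cross exactly when the corresponding edges of $G$ cross, and the edges of the $C_v$'s cross nothing; hence in the plane case no edge is crossed and in the $1$-plane case every edge is crossed at most once, which gives a plane embedding (resp. a $1$-plane drawing) of $G_{\textrm{infl.}}$.

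For the second assertion, observe that $\delta(G)\ge 3$ and $\Delta(G)\le 7$ force each inflated cycle $C_v$ of $G_{\textrm{infl.}}$ to have length $\deg(v)\in\{3,4,5,6,7\}$, so for every non-triangular $C_v$ a $k$-gadget with $k=\deg(v)$ is available in Figure~\ref{gadget}. In the drawing of $G_{\textrm{infl.}}$ built above, the part of $D_v$ enclosed by $C_v$ is an empty disk, i.e.\ a face whose boundary is exactly $C_v$. Since each $k$-gadget is a plane graph whose outer face is bounded by a $k$-cycle, I can embed a copy of it into that empty disk with its outer $k$-cycle identified with $C_v$; this introduces new vertices and edges but no crossing, and leaves every pre-existing crossing untouched. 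Carrying this out for all non-triangular inflated cycles produces a $1$-plane drawing of $G_{\textrm{infl.}}^{+}$.

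I do not expect a genuine obstacle here: the only points needing care are the topological details of the first step — making the disks $D_v$ pairwise disjoint and free of crossing points, and keeping the parallel copies of the edges of $G$ close enough to the originals to avoid spurious crossings — together with the remark that $\Delta(G)\le 7$ is exactly the hypothesis ensuring that a gadget of the right size exists for each inflated cycle that receives one.
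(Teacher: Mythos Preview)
Your argument is correct and is precisely the elaboration the paper has in mind: the paper does not write out a proof of this proposition at all, declaring it ``straightforward by the arguments above,'' i.e.\ by the description of the canonical inflation and of the gadget attachment. Your disk-around-each-vertex construction and the observation that the interior of each inflated cycle is an empty face into which the plane gadget can be inserted are exactly the topological details that make this straightforwardness explicit.
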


\begin{prop}
\label{prop_gadget}
Fix $k\in \{4, 5, 6, 7\}$ and let $G_k$ be a $k$-gadget. 
Let $C = v_1\cdots v_k$ be the outer $k$-cycle of $G_k$. 
Let $G'_k$ be the graph obtained from $G_k$ by adding a new vertex $x$ and edges $xv_i$ for all $i \in \{1, \ldots, k\}$. 
Then, the graph $G'_k$ is $k$-regular and $k$-connected. 
\end{prop}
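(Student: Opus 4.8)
The plan is to handle the two assertions of Proposition~\ref{prop_gadget} separately, and for the connectivity part to exploit the explicit — and rather symmetric — structure of the gadgets in Figure~\ref{gadget}. The $k$-regularity of $G'_k$ is a direct inspection: in each $k$-gadget $G_k$ every vertex on the outer $k$-cycle $C$ carries exactly $k-1$ edges (the two cycle edges of $C$ together with $k-3$ edges running into the interior), while every interior vertex of $G_k$ already has degree $k$. Adding the apex $x$ together with the edges $xv_1,\dots,xv_k$ therefore raises $\deg(v_i)$ to $k$ for each $i$ and gives $\deg(x)=k$, without changing any interior degree, so $G'_k$ is $k$-regular. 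This step is routine once the figure is on the table.

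For the connectivity, note first that $k$-regularity already gives $\kappa(G'_k)\le k$, so the content is the lower bound $\kappa(G'_k)\ge k$; equivalently, by Menger's theorem, that any two vertices $u,w$ of $G'_k$ are joined by $k$ internally disjoint paths, or equivalently (Whitney) that no set of $k-1$ vertices separates $G'_k$. I would verify this case by case for $k\in\{4,5,6,7\}$, using the rotational automorphism of $G_k$ that cyclically shifts $C$ (and fixes $x$): this action reduces the task to a bounded number of orbit-representative pairs $\{u,w\}$, for each of which one writes down the $k$ paths explicitly. An attractive shortcut in the small cases is to recognize $G'_k$ as a familiar highly connected graph — for instance one checks that $G'_4$ is the octahedron $K_{2,2,2}$ and $G'_5$ is the icosahedron, both of which are known to be $k$-regular and $k$-connected — and to invoke their connectivity directly; the remaining cases $k\in\{6,7\}$ are then finite checks of the same flavour.

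The only real obstacle is this lower bound $\kappa(G'_k)\ge k$. Since the four gadgets need not admit a single uniform description, the argument is essentially four small finite verifications, and the mild care required is twofold: to organize each verification so that the symmetry of $G_k$ keeps the number of vertex pairs (or candidate separators) manageable, and to keep track of the interior vertices of $G_k$, whose neighbourhoods are precisely what forces the high connectivity — the ``wheel'' $x+C$ on its own being only $3$-connected, so all of the extra connectivity must be supplied by the interior of the gadget.
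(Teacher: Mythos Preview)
Your proposal is correct and matches the paper's own argument, which simply records that $k$-regularity is ``trivial'' by construction and that exhibiting $k$ internally disjoint $uv$-paths for each pair $u,v$ is ``tedious but straightforward.'' Your added structure---using the cyclic symmetry of $G_k$ to cut down the casework and recognising $G'_4$ and $G'_5$ as the octahedron and the icosahedron---is a sound elaboration of exactly that finite check, not a different method.
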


\begin{proof}
By construction, the $k$-regularity is trivial. 
It is tedious but straightforward to find $k$ vertex-disjoint $uv$-paths in $G'_k$ for every pair of distinct vertices $u, v \in V(G'_k)$.
\end{proof}

\begin{prop}
\label{prop_k-conn}
Fix $k\in \{3, 4, 5, 6, 7\}$ and let $G$ be a $k$-edge connected $k$-regular graph. 
Then, for any $\rho$ given to $G$, the graph $(G_{\rho})_{\textrm{infl.}}^{+}$ is $k$-regular and $k$-connected. 
\end{prop}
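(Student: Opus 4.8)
The plan is to prove the two assertions separately, the $k$-regularity being essentially bookkeeping and the $k$-connectivity being the real content. Write $H=(G_{\rho})_{\textrm{infl.}}^{+}$. Since $G$ is $k$-regular, each inflated cycle $C_v$ has length $\deg_G(v)=k$, so for $k\in\{4,5,6,7\}$ we attach a $k$-gadget $G_k$ to every inflated cycle, whereas for $k=3$ no gadget is attached and $H$ equals the inflation, which is cubic by construction. For $k\ge4$ I would call the subgraph consisting of $C_v$ together with its attached gadget the \emph{blob} $B_v$ (so $B_v$ is a copy of $G_k$, with outer cycle $C_v$), and for $k=3$ I set $B_v=C_v$. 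Contracting each blob to a point recovers $G$, and the edges of $H$ joining distinct blobs — call them \emph{edge-edges} — are in bijection with $E(G)$, exactly one at each vertex of each $C_v$. For $k$-regularity: in $H$ a vertex of $C_v$ has two cycle-edges, one edge-edge, and (for $k\ge4$) its edges into the gadget interior; in the auxiliary graph $G'_k$ of Proposition~\ref{prop_gadget} the same vertex has two cycle-edges, one apex edge, and the same interior edges, and $G'_k$ is $k$-regular, so the count is $k$; interior gadget vertices are non-adjacent to the apex in $G'_k$, hence already have degree $k$ inside $G_k$. Thus $H$ is $k$-regular in all cases.

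For connectivity, since $H$ is $k$-regular it suffices to show $H-S$ is connected for every $S\subseteq V(H)$ with $|S|\le k-1$. I would isolate two structural facts. First, $G_k=G'_k-x$, and since $G'_k$ is $k$-connected, $G_k$ is $(k-1)$-connected; consequently, if a blob loses at most $k-2$ vertices of $S$ it remains connected, and if it loses exactly $k-1$ then, because the apex $x$ is adjacent to all of $C_v$ and $G'_k-(S\cap V(B_v))$ is still connected, every component of $B_v-S$ contains a vertex of $C_v\setminus S$. (For $k=3$ these statements about $B_v=C_v$ are trivial.) Second, a $k$-regular graph with $\kappa'(G)=k\ge2$ has no cut vertex: otherwise a component $A$ of $G-v$ would give $\partial_G(A)\subseteq\delta_G(v)$ with $1\le|\partial_G(A)|\le k-1$, contradicting $k$-edge-connectivity; hence $G-v$ is connected for every $v$.

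Now I would split into two cases. If all of $S$ lies in one blob $B_v$, then all other blobs, linked through the edge-edges corresponding to $E(G-v)$, form a connected subgraph of $H-S$ (using that $G-v$ is connected), and by the first fact every component of $B_v-S$ reaches $C_v\setminus S$ along an intact edge-edge, so $H-S$ is connected. Otherwise every blob loses at most $k-2$ vertices, so $B_v-S$ is nonempty and connected for every $v$, hence lies in a single component of $H-S$; colouring each $v\in V(G)$ by that component partitions $V(G)$ into at least two nonempty classes, and for any class $Y$ and any edge $vw\in\partial_G(Y)$ the corresponding edge-edge must be ``broken'', i.e.\ one of its two endpoints lies in $S$. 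Since the endpoint pairs of distinct edge-edges are pairwise disjoint, this yields $|S|\ge|\partial_G(Y)|\ge\kappa'(G)=k$, a contradiction. Therefore $\kappa(H)\ge k$, and with $\kappa(H)\le\delta(H)=k$ we get that $H$ is $k$-connected.

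The step I expect to be the main obstacle is dealing with the fact that deleting nearly $k$ vertices can shatter a single gadget-blob into several pieces, so one cannot simply treat each blob as an indivisible super-vertex and contract down to $G$; this is precisely where Proposition~\ref{prop_gadget} has to be used twice — once to get $(k-1)$-connectivity of $G_k$, and once to guarantee that each surviving fragment still meets the outer cycle — and where the passage from a vertex cut of $H$ to an edge cut of $G$ (via the bijection between edge-edges and $E(G)$, with the endpoint-disjointness making the counting tight) requires the most care. One should also double-check the low-$k$ degenerate situations ($k=3$, where there are no gadgets, and small $S$), but these are routine once the framework above is set up.
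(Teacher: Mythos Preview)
Your proof is correct and takes a genuinely different route from the paper's. The paper argues on the path side of Menger: for any two vertices $u,v$ of $H$ it sketches how to exhibit $k$ internally disjoint $u$--$v$ paths, splitting according to whether $u,v$ lie in the same gadget (Proposition~\ref{prop_gadget} yields $k-1$ paths inside the gadget, and the one path of $G'_k$ through the apex $x$ is rerouted through the rest of $H$) or in different gadgets (the $k$ edge-disjoint paths of $G$ between the two corresponding vertices are lifted to vertex-disjoint paths in $H$, with the remark that ``it is not difficult to see'' how to thread them through the gadgets). You instead work on the cut side: delete any $S$ with $|S|\le k-1$ and show $H-S$ stays connected, distinguishing whether $S$ is confined to one blob (where you use that $G$ has no cut vertex and that every fragment of a shattered blob still meets its outer cycle) or spread over several (where each blob survives intact by the $(k-1)$-connectivity of $G_k$, and a putative separation of $H-S$ is converted, via the bijection between edge-edges and $E(G)$ with disjoint endpoint pairs, into an edge cut of $G$ of size at most $|S|<k$). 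Your version is more fully fleshed out than the paper's sketch --- in particular it makes explicit precisely the two consequences of Proposition~\ref{prop_gadget} that the paper's lifting step tacitly relies on, and it shows transparently why \emph{edge}-connectivity of $G$ is the right hypothesis. One small presentational slip: in your second case you assert the colouring has at least two classes before having supposed $H-S$ disconnected; the intended ``suppose not'' is evident from the closing ``a contradiction'', but it would read more cleanly if stated at the outset of the case.
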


\begin{proof}
Again, the $k$-regularity is trivial. 
For every pair of distinct vertices $u, v \in V((G_{\rho})_{\textrm{infl.}}^{+})$, one can take $k$ vertex-disjoint $uv$-paths in $(G_{\rho})_{\textrm{infl.}}^{+}$ as follows. If $u$ and $v$ belong to the same gadgets, then, by Proposition \ref{prop_gadget}, there are $k-1$ vertex-disjoint $uv$-paths in the gadget and one such path possibly through other gadgets. 
If $u$ and $v$ belong to different gadgets, then there are $k$ edge-disjoint paths, between two vertices that correspond to the two gadgets, in $G$ since $G$ is $k$-edge connected. Hence, it is not difficult to see that these $k$ paths correspond to $k$ vertex-disjoint $uv$-paths in $(G_{\rho})_{\textrm{infl.}}^{+}$. 
(Note that we can take such $k$-vertex disjoint paths using edges corresponding to those of $k$-edge disjoint paths in $G$, and using suitable paths in gadgets.)
\end{proof}

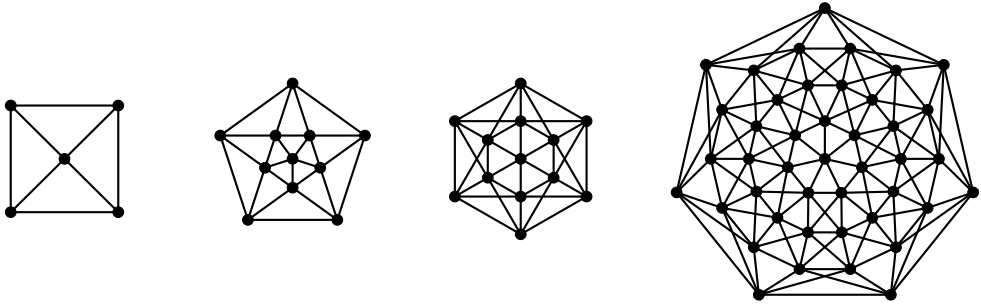
\begin{figure}[th]
\centering
\begin{tikzpicture}
\coordinate (A0) at ({-10+0},0);
\coordinate (A) at ({-10+cos(pi/4 r)},{sin(pi/4 r)});
\coordinate (B) at ({-10+cos(3*pi/4 r)},{sin(3*pi/4 r)});
\coordinate (C) at ({-10+cos(5*pi/4 r)},{sin(5*pi/4 r)});
\coordinate (D) at ({-10+cos(7*pi/4 r)},{sin(7*pi/4 r)});

\coordinate (B0) at ({-7+0},0);
\coordinate (E) at ({-7+0},1);
\coordinate (F) at ({-7+cos(9*pi/10 r)},{sin(9*pi/10 r)});
\coordinate (G) at ({-7+cos(13*pi/10 r)},{sin(13*pi/10 r)});
\coordinate (H) at ({-7+cos(17*pi/10 r)},{sin(17*pi/10 r)});
\coordinate (I) at ({-7+cos(21*pi/10 r)},{sin(21*pi/10 r)});

\coordinate (C0) at ({-4+0},0);
\coordinate (J) at ({-4+0},1);
\coordinate (K) at ({-4+cos(5*pi/6 r)},{sin(5*pi/6 r)});
\coordinate (L) at ({-4+cos(7*pi/6 r)},{sin(7*pi/6 r)});
\coordinate (M) at ({-4+0},{-1});
\coordinate (N) at ({-4+cos(11*pi/6 r)},{sin(11*pi/6 r)});
\coordinate (O) at ({-4+cos(13*pi/6 r)},{sin(13*pi/6 r)});

\draw[fill=black] (A0) circle (2pt);
\draw[fill=black] (A) circle (2pt);
\draw[fill=black] (B) circle (2pt);
\draw[fill=black] (C) circle (2pt);
\draw[fill=black] (D) circle (2pt);

\draw[fill=black] (B0) circle (2pt);
\draw[fill=black] (E) circle (2pt);
\draw[fill=black] (F) circle (2pt);
\draw[fill=black] (G) circle (2pt);
\draw[fill=black] (H) circle (2pt);
\draw[fill=black] (I) circle (2pt);

\draw[fill=black] (C0) circle (2pt);
\draw[fill=black] (J) circle (2pt);
\draw[fill=black] (K) circle (2pt);
\draw[fill=black] (L) circle (2pt);
\draw[fill=black] (M) circle (2pt);
\draw[fill=black] (N) circle (2pt);
\draw[fill=black] (O) circle (2pt);

\draw[thick] (A)--(B)--(C)--(D)--(A);
\draw[thick] (A)--(C);
\draw[thick] (B)--(D);

\draw[thick] (E)--(F)--(G)--(H)--(I)--(E);
\draw[name path=b1,thick] (E)--(G);
\draw[name path=b2,thick] (G)--(I);
\draw[name path=b3,thick] (I)--(F);
\draw[name path=b4,thick] (F)--(H);
\draw[name path=b5,thick] (H)--(E);
\path[name intersections={of=b1 and b3,by=b6}];
\path[name intersections={of=b2 and b4,by=b7}];
\path[name intersections={of=b3 and b5,by=b8}];
\path[name intersections={of=b4 and b1,by=b9}];
\path[name intersections={of=b5 and b2,by=b10}];
\draw[fill=black] (b6) circle (2pt);
\draw[fill=black] (b7) circle (2pt);
\draw[fill=black] (b8) circle (2pt);
\draw[fill=black] (b9) circle (2pt);
\draw[fill=black] (b10) circle (2pt);
\draw[thick] (B0)--(b6);
\draw[thick] (B0)--(b7);
\draw[thick] (B0)--(b8);
\draw[thick] (B0)--(b9);
\draw[thick] (B0)--(b10);

\draw[thick] (J)--(K)--(L)--(M)--(N)--(O)--(J);
\draw[name path=c1,thick] (J)--(L);
\draw[name path=c2,thick] (K)--(M);
\draw[name path=c3,thick] (L)--(N);
\draw[name path=c4,thick] (M)--(O);
\draw[name path=c5,thick] (N)--(J);
\draw[name path=c6,thick] (O)--(K);
\draw[name path=ca,thick] (J)--(M);
\draw[name path=cb,thick] (K)--(N);
\draw[name path=cc,thick] (L)--(O);

\path[name intersections={of=c1 and cb,by=c7}];
\path[name intersections={of=c2 and cc,by=c8}];
\path[name intersections={of=c3 and ca,by=c9}];
\path[name intersections={of=c4 and cb,by=c10}];
\path[name intersections={of=c5 and cc,by=c11}];
\path[name intersections={of=c6 and ca,by=c12}];
\draw[fill=black] (c7) circle (2pt);
\draw[fill=black] (c8) circle (2pt);
\draw[fill=black] (c9) circle (2pt);
\draw[fill=black] (c10) circle (2pt);
\draw[fill=black] (c11) circle (2pt);
\draw[fill=black] (c12) circle (2pt);
\draw[thick] (c7)--(c8)--(c9)--(c10)--(c11)--(c12)--(c7);

\coordinate (A_0) at (0,0);
\foreach \k in{1,...,7}
    \coordinate (A_\k) at ({0.5*cos((4*\k+3)*pi/14 r)},{0.5*sin((4*\k+3)*pi/14 r)});
\foreach \k in{1,...,14}
    \coordinate (B_\k) at ({cos(\k*pi/7 r)},{sin(\k*pi/7 r)});
\foreach \k in{1,...,14}
    \coordinate (C_\k) at ({1.5*cos(\k*pi/7 r)},{1.5*sin(\k*pi/7 r)});
\foreach \k in{1,...,7}
    \coordinate (D_\k) at ({2*cos((4*\k+3)*pi/14 r)},{2*sin((4*\k+3)*pi/14 r)});

\foreach \k in{0,...,7}
   \draw[fill=black] (A_\k) circle (2pt);
\foreach \k in{1,...,14}
   \draw[fill=black] (B_\k) circle (2pt);
\foreach \k in{1,...,14}
   \draw[fill=black] (C_\k) circle (2pt);
\foreach \k in{1,...,7}
   \draw[fill=black] (D_\k) circle (2pt);

\draw[thick] (A_1)--(A_2)--(A_3)--(A_4)--(A_5)--(A_6)--(A_7)--(A_1);
\draw[thick] (B_1)--(B_2)--(B_3)--(B_4)--(B_5)--(B_6)--(B_7)--(B_8)--(B_9)--(B_10)--(B_11)--(B_12)--(B_13)--(B_14)--(B_1);
\draw[thick] (C_1)--(C_2)--(C_3)--(C_4)--(C_5)--(C_6)--(C_7)--(C_8)--(C_9)--(C_10)--(C_11)--(C_12)--(C_13)--(C_14)--(C_1);
\draw[thick] (D_1)--(D_2)--(D_3)--(D_4)--(D_5)--(D_6)--(D_7)--(D_1);

\foreach \k in{1,...,7}
   \draw[thick] (A_0)--(A_\k);
\foreach \k in{1,...,14}
   \draw[thick] (B_\k)--(C_\k);

\draw[thick] (A_1)--(B_3)--(C_4)--(D_1)--(C_3)--(B_4)--(A_1)--(B_5)--(C_4)--(D_2)--(C_5)--(B_6)--(A_2)--(B_5)--(C_6)--(D_2)--(C_7)--(B_6)--(A_3);
\draw[thick] (A_3)--(B_7)--(C_8)--(D_3)--(C_7)--(B_8)--(A_3)--(B_9)--(C_8)--(D_4)--(C_9)--(B_10)--(A_4)--(B_9)--(C_10)--(D_4)--(C_11)--(B_10)--(A_5);
\draw[thick] (A_5)--(B_11)--(C_12)--(D_5)--(C_11)--(B_12)--(A_5)--(B_13)--(C_12)--(D_6)--(C_13)--(B_14)--(A_6)--(B_13)--(C_14)--(D_6)--(C_1)--(B_14)--(A_7);
\draw[thick] (A_7)--(B_1)--(C_2)--(D_7)--(C_1)--(B_2)--(A_7)--(B_3)--(C_2)--(D_1)--(C_5)--(B_4)--(A_2)--(B_7)--(C_6)--(D_3)--(C_9)--(B_8)--(A_4);
\draw[thick] (A_4)--(B_11)--(C_10)--(D_5)--(C_13)--(B_12)--(A_6)--(B_1)--(C_14)--(D_7)--(C_3)--(B_2)--(A_1);
\end{tikzpicture}
 \caption{4-, 5-, 6-, and 7-gadgets.}
 \label{gadget}
\end{figure}

\medskip
\noindent
\textbf{Crossing operation}

Let $G$ be a plane multigraph with $\delta(G) \ge 3$ and $\Delta(G) \le 7$. 
Suppose that $G$ has an even cycle $C = v_1e_1v_2e_2 \cdots v_{2k}e_{2k}v_1$ ($k=1$ is allowed\red{;} $e_1$ and $e_2$ consist of parallel edges). 
Suppose further that two edges $e_{2j-1}$ and $e_{2j}$ are consecutive in the rotation of $v_{2j}$ for $j \in \{1, \ldots, k \}$; 
note that this condition always occurs if $G$ is cubic. 
Recall that $G_{\textrm{infl.}}^{+}$ denotes a $1$-plane graph obtained from $G$ by applying the canonical inflation and attaching a gadget to each inflated cycle. 
In $G_{\textrm{infl.}}^{+}$, we still refer to the $2k$ edges corresponding to those of $C$ as $e_1, \ldots, e_{2k}$. 
Then, one can get a new $1$-plane graph $H$ from $G_{\textrm{infl.}}^{+}$ by crossing $e_{2j-1}$ and $e_{2j}$ for each $j \in \{1, \ldots, k \}$; 
that is, we exchange two end vertices of $e_{2j-1}$ and $e_{2j}$ of $G_{\textrm{infl.}}^{+}$, which are vertices in the inflated cycle of $v_{2j}$; see Figure~\ref{fig:2} for example. 
We say that $H$ is obtained from $G_{\textrm{infl.}}^{+}$ by a {\it crossing operation\/} along the even cycle $C$ 
of (the original graph) $G$. 
We can define it not only for an even cycle but also for a path $P = v_1e_1v_2e_2 \cdots v_{2k}e_{2k}v_{2k+1}$ of even length $2k$ for a positive integer $k$; in that case, we similarly cross edges $e_1$ and $e_2$, $e_3$ and $e_4$, and so on.
(For the operation on an even cycle, if the first vertex is not suggested, then take arbitrary one from two choices.) 
Furthermore, we consider a crossing operation along some disjoint cycles and paths of $G$ simultaneously. 
Notice that $H$ is a graph isomorphic to $(G_{\rho})_{\textrm{infl.}}^{+}$ for some $\rho$; $\rho$ probably does not coincide with the rotation system of $G$, that is, $H$ is obtained from a noncanonical inflation of $G$ by attaching gadgets. 

\begin{figure}[th]
\centering
\begin{tikzpicture}[scale=0.7]
\def\y{11}
\def\z{10}
\coordinate [label=right: {$G$}] (fig:1) at (-4,3.5);
\coordinate [label=right: {$G_{\textrm{infl.}}$}] (fig:2) at (\y-4,3.5);
\coordinate [label=right: {$G_{\textrm{infl.}}^{+}$}] (fig:3) at (-4,-6.5);
\coordinate [label=right: {$(G_{\rho})_{\textrm{infl.}}^{+}$}] (fig:4) at (\y-4,-6.5);
\coordinate [label=center: {$e_1$}] (1_1) at ({3*cos(pi/3 r)},{3*sin(pi/3 r)});
\coordinate [label=center: {$e_6$}] (1_2) at ({3*cos(2*pi/3 r)},{3*sin(2*pi/3 r)});
\coordinate [label=center: {$e_5$}] (1_3) at (-3,0);
\coordinate [label=center: {$e_4$}] (1_4) at ({3*cos(4*pi/3 r)},{3*sin(4*pi/3 r)});
\coordinate [label=center: {$e_3$}] (1_5) at ({3*cos(5*pi/3 r)},{3*sin(5*pi/3 r)});
\coordinate [label=center: {$e_2$}] (1_6) at (3,0);

\draw [->, >=Latex] (4.5,0) -- (6.5,0);
\draw [->, >=Latex] (6.5,-4) -- (4.5,-6);
\draw [->, >=Latex] (4.5,-10) -- (6.5,-10);

\coordinate [label=above right: {$v_1$}] (A_1) at (0,{3*1});
\coordinate [label=above: {$v_2$}] (A_2) at ({3*cos(pi/6 r)},{3*sin(pi/6 r)});
\coordinate [label=above right: {$v_3$}] (A_3) at ({3*cos(-pi/6 r)},{3*sin(-pi/6 r)});
\coordinate [label=above: {$v_4$}] (A_4) at (0,{3*-1});
\coordinate [label=above left: {$v_5$}] (A_5) at ({3*cos(7*pi/6 r)},{3*sin(7*pi/6 r)});
\coordinate [label=above left: {$v_6$}] (A_6) at ({3*cos(5*pi/6 r)},{3*sin(5*pi/6 r)});
\foreach \k in{1,...,6}
   \draw[fill=black] (A_\k) circle (2pt);

\draw[thick] (A_1)--(A_2)--(A_3)--(A_4)--(A_5)--(A_6)--(A_1);
\draw[thick] (A_1)--(0,4);
\draw[thick] (A_2)--(30:4);
\draw[thick] (A_3)--({1+3*cos(-pi/6 r)},{3*sin(-pi/6 r)});
\draw[thick] (330:2)--(330:4);
\draw[thick] (A_4)--(0,-4);
\draw[thick] (A_4)--(0.5,{-(6+sqrt(3))/2});
\draw[thick] (A_4)--(-0.5,{-(6+sqrt(3))/2});
\draw[thick] (210:2)--(210:4);
\draw[thick] (A_6)--(150:2);
\draw[thick] (A_6)--({1+3*cos(5*pi/6 r)},{3*sin(5*pi/6 r)});

\coordinate (B_1) at ({\y+0},{3*1});
\coordinate (B_2) at ({\y+3*cos(pi/6 r)},{3*sin(pi/6 r)});
\coordinate (B_3) at ({\y+3*cos(-pi/6 r)},{3*sin(-pi/6 r)});
\coordinate (B_4) at ({\y+0},{3*-1});
\coordinate (B_5) at ({\y+3*cos(7*pi/6 r)},{3*sin(7*pi/6 r)});
\coordinate (B_6) at ({\y+3*cos(5*pi/6 r)},{3*sin(5*pi/6 r)});

\foreach \k in{1,...,3}
   {
   \coordinate (B1_\k) at ({\y+0.5*cos((4*\k-1)*pi/6 r)},{3*1+0.5*sin((4*\k-1)*pi/6 r});
   \draw[fill=black] (B1_\k) circle (2pt);
   \coordinate (B2_\k) at ({\y+3*cos(pi/6 r)+0.5*cos((4*\k-3)*pi/6 r)},{3*sin(pi/6 r)+0.5*sin((4*\k-3)*pi/6 r});
   \draw[fill=black] (B2_\k) circle (2pt);
   }
\foreach \k in{1,...,5}
   {
   \coordinate (B3_\k) at ({\y+3*cos(-pi/6 r)+0.5*cos((4*\k+1)*pi/10 r)},{3*sin(-pi/6 r)+0.5*sin((4*\k+1)*pi/10 r});
   \draw[fill=black] (B3_\k) circle (2pt);
   \coordinate (B4_\k) at ({\y+0.5*cos((4*\k-1)*pi/10 r)},{3*-1+0.5*sin((4*\k-1)*pi/10 r});
   \draw[fill=black] (B4_\k) circle (2pt);
   }
\foreach \k in{1,...,4}
   {
   \coordinate (B5_\k) at ({\y+3*cos(7*pi/6 r)+0.5*cos((2*\k-1)*pi/4 r)},{3*sin(7*pi/6 r)+0.5*sin((2*\k-1)*pi/4 r});
   \draw[fill=black] (B5_\k) circle (2pt);
   \coordinate (B6_\k) at ({\y+3*cos(5*pi/6 r)+0.5*cos((2*\k-1)*pi/4 r)},{3*sin(5*pi/6 r)+0.5*sin((2*\k-1)*pi/4 r});
   \draw[fill=black] (B6_\k) circle (2pt);
   }

\draw[thick] (B1_1)--(B1_2)--(B1_3)--(B1_1);
\draw[thick] (B2_1)--(B2_2)--(B2_3)--(B2_1);
\draw[thick] (B3_1)--(B3_2)--(B3_3)--(B3_4)--(B3_5)--(B3_1);
\draw[thick] (B4_1)--(B4_2)--(B4_3)--(B4_4)--(B4_5)--(B4_1);
\draw[thick] (B5_1)--(B5_2)--(B5_3)--(B5_4)--(B5_1);
\draw[thick] (B6_1)--(B6_2)--(B6_3)--(B6_4)--(B6_1);
\draw[thick] (B1_1)--++(0,1);
\draw[thick] (B1_2)--(B6_2);
\draw[thick] (B1_3)--(B2_2);
\draw[thick] (B2_1)--++({sqrt(3)/2},0.5);
\draw[thick] (B2_3)--(B3_1);
\draw[thick] (B3_2)--++(-1,0);
\draw[thick] (B3_3)--(B4_1);
\draw[thick] (B3_4)--++({sqrt(3)/2},-0.5);
\draw[thick] (B3_5)--++(1,0); 
\draw[thick] (B4_2)--(B5_4);
\draw[thick] (B4_3)--++({-sqrt(3)/2},-0.5);
\draw[thick] (B4_4)--++(0,-1);
\draw[thick] (B4_5)--++({sqrt(3)/2},-0.5);
\draw[thick] (B5_1)--++({sqrt(2)/2},{sqrt(2)/2});
\draw[thick] (B5_2)--(B6_3);
\draw[thick] (B5_3)--++({-sqrt(2)/2},{-sqrt(2)/2});
\draw[thick] (B6_1)--++(1,0);
\draw[thick] (B6_4)--++({sqrt(3)/2},-0.5);

\coordinate (C_1) at (0,{-\z+3*1});
\coordinate (C_2) at ({3*cos(pi/6 r)},{-\z+3*sin(pi/6 r)});
\coordinate (C_3) at ({3*cos(-pi/6 r)},{-\z+3*sin(-pi/6 r)});
\coordinate (C_4) at (0,{-\z+3*-1});
\coordinate (C_5) at ({3*cos(7*pi/6 r)},{-\z+3*sin(7*pi/6 r)});
\coordinate (C_6) at ({3*cos(5*pi/6 r)},{-\z+3*sin(5*pi/6 r)});
\draw[fill=black] (C_3) circle (2pt);
\draw[fill=black] (C_4) circle (2pt);
\draw[fill=black] (C_5) circle (2pt);
\draw[fill=black] (C_6) circle (2pt);

\foreach \k in{1,...,3}
   {
   \coordinate (C1_\k) at ({0.5*cos((4*\k-1)*pi/6 r)},{-\z+3*1+0.5*sin((4*\k-1)*pi/6 r});
   \draw[fill=black] (C1_\k) circle (2pt);
   \coordinate (C2_\k) at ({3*cos(pi/6 r)+0.5*cos((4*\k-3)*pi/6 r)},{-\z+3*sin(pi/6 r)+0.5*sin((4*\k-3)*pi/6 r});
   \draw[fill=black] (C2_\k) circle (2pt);
   }

\foreach \k in{1,...,5}
   {
   \coordinate (C3_\k) at ({3*cos(-pi/6 r)+0.5*cos((4*\k+1)*pi/10 r)},{-\z+3*sin(-pi/6 r)+0.5*sin((4*\k+1)*pi/10 r});
   \draw[fill=black] (C3_\k) circle (2pt);
   \coordinate (C4_\k) at ({0.5*cos((4*\k-1)*pi/10 r)},{-\z+3*-1+0.5*sin((4*\k-1)*pi/10 r});
   \draw[fill=black] (C4_\k) circle (2pt);
   }

\foreach \k in{1,...,4}
   {
   \coordinate (C5_\k) at ({3*cos(7*pi/6 r)+0.5*cos((2*\k-1)*pi/4 r)},{-\z+3*sin(7*pi/6 r)+0.5*sin((2*\k-1)*pi/4 r});
   \draw[fill=black] (C5_\k) circle (2pt);
   \coordinate (C6_\k) at ({3*cos(5*pi/6 r)+0.5*cos((2*\k-1)*pi/4 r)},{-\z+3*sin(5*pi/6 r)+0.5*sin((2*\k-1)*pi/4 r});
   \draw[fill=black] (C6_\k) circle (2pt);
   }

\draw[thick] (C1_1)--(C1_2)--(C1_3)--(C1_1);
\draw[thick] (C2_1)--(C2_2)--(C2_3)--(C2_1);
\draw[thick] (C3_1)--(C3_2)--(C3_3)--(C3_4)--(C3_5)--(C3_1);
\draw[thick] (C4_1)--(C4_2)--(C4_3)--(C4_4)--(C4_5)--(C4_1);
\draw[thick] (C5_1)--(C5_2)--(C5_3)--(C5_4)--(C5_1)--(C5_3)--(C5_2)--(C5_4);
\draw[thick] (C6_1)--(C6_2)--(C6_3)--(C6_4)--(C6_1)--(C6_3)--(C6_2)--(C6_4);

\draw[name path=c1,thick] (C3_1)--(C3_3);
\draw[name path=c2,thick] (C3_2)--(C3_4);
\draw[name path=c3,thick] (C3_3)--(C3_5);
\draw[name path=c4,thick] (C3_4)--(C3_1);
\draw[name path=c5,thick] (C3_5)--(C3_2);
\path[name intersections={of=c1 and c2,by=c6}];
\path[name intersections={of=c2 and c3,by=c7}];
\path[name intersections={of=c3 and c4,by=c8}];
\path[name intersections={of=c4 and c5,by=c9}];
\path[name intersections={of=c5 and c1,by=c10}];
\draw[fill=black] (c6) circle (2pt);
\draw[fill=black] (c7) circle (2pt);
\draw[fill=black] (c8) circle (2pt);
\draw[fill=black] (c9) circle (2pt);
\draw[fill=black] (c10) circle (2pt);
\draw[thick] (C_3)--(c6);
\draw[thick] (C_3)--(c7);
\draw[thick] (C_3)--(c8);
\draw[thick] (C_3)--(c9);
\draw[thick] (C_3)--(c10);

\draw[name path=c11,thick] (C4_1)--(C4_3);
\draw[name path=c12,thick] (C4_2)--(C4_4);
\draw[name path=c13,thick] (C4_3)--(C4_5);
\draw[name path=c14,thick] (C4_4)--(C4_1);
\draw[name path=c15,thick] (C4_5)--(C4_2);
\path[name intersections={of=c11 and c12,by=c16}];
\path[name intersections={of=c12 and c13,by=c17}];
\path[name intersections={of=c13 and c14,by=c18}];
\path[name intersections={of=c14 and c15,by=c19}];
\path[name intersections={of=c15 and c11,by=c20}];
\draw[fill=black] (c16) circle (2pt);
\draw[fill=black] (c17) circle (2pt);
\draw[fill=black] (c18) circle (2pt);
\draw[fill=black] (c19) circle (2pt);
\draw[fill=black] (c20) circle (2pt);
\draw[thick] (C_4)--(c16);
\draw[thick] (C_4)--(c17);
\draw[thick] (C_4)--(c18);
\draw[thick] (C_4)--(c19);
\draw[thick] (C_4)--(c20);

\draw[thick] (C1_1)--++(0,1);
\draw[thick] (C1_2)--(C6_2);
\draw[thick] (C1_3)--(C2_2);
\draw[thick] (C2_1)--++({sqrt(3)/2},0.5);
\draw[thick] (C2_3)--(C3_1);
\draw[thick] (C3_2)--++(-1,0);
\draw[thick] (C3_3)--(C4_1);
\draw[thick] (C3_4)--++({sqrt(3)/2},-0.5);
\draw[thick] (C3_5)--++(1,0); 
\draw[thick] (C4_2)--(C5_4);
\draw[thick] (C4_3)--++({-sqrt(3)/2},-0.5);
\draw[thick] (C4_4)--++(0,-1);
\draw[thick] (C4_5)--++({sqrt(3)/2},-0.5);
\draw[thick] (C5_1)--++({sqrt(2)/2},{sqrt(2)/2});
\draw[thick] (C5_2)--(C6_3);
\draw[thick] (C5_3)--++({-sqrt(2)/2},{-sqrt(2)/2});
\draw[thick] (C6_1)--++(1,0);
\draw[thick] (C6_4)--++({sqrt(3)/2},-0.5);

\coordinate (D_1) at ({\y+0},{-\z+3*1});
\coordinate (D_2) at ({\y+3*cos(pi/6 r)},{-\z+3*sin(pi/6 r)});
\coordinate (D_3) at ({\y+3*cos(-pi/6 r)},{-\z+3*sin(-pi/6 r)});
\coordinate (D_4) at ({\y+0},{-\z+3*-1});
\coordinate (D_5) at ({\y+3*cos(7*pi/6 r)},{-\z+3*sin(7*pi/6 r)});
\coordinate (D_6) at ({\y+3*cos(5*pi/6 r)},{-\z+3*sin(5*pi/6 r)});
\draw[fill=black] (D_3) circle (2pt);
\draw[fill=black] (D_4) circle (2pt);
\draw[fill=black] (D_5) circle (2pt);
\draw[fill=black] (D_6) circle (2pt);

\foreach \k in{1,...,3}
   {
   \coordinate (D1_\k) at ({\y+0.5*cos((4*\k-1)*pi/6 r)},{-\z+3*1+0.5*sin((4*\k-1)*pi/6 r});
   \draw[fill=black] (D1_\k) circle (2pt);
   \coordinate (D2_\k) at ({\y+3*cos(pi/6 r)+0.5*cos((4*\k-3)*pi/6 r)},{-\z+3*sin(pi/6 r)+0.5*sin((4*\k-3)*pi/6 r});
   \draw[fill=black] (D2_\k) circle (2pt);
   }

\foreach \k in{1,...,5}
   {
   \coordinate (D3_\k) at ({\y+3*cos(-pi/6 r)+0.5*cos((4*\k+1)*pi/10 r)},{-\z+3*sin(-pi/6 r)+0.5*sin((4*\k+1)*pi/10 r});
   \draw[fill=black] (D3_\k) circle (2pt);
   \coordinate (D4_\k) at ({\y+0.5*cos((4*\k-1)*pi/10 r)},{-\z+3*-1+0.5*sin((4*\k-1)*pi/10 r});
   \draw[fill=black] (D4_\k) circle (2pt);
   }

\foreach \k in{1,...,4}
   {
   \coordinate (D5_\k) at ({\y+3*cos(7*pi/6 r)+0.5*cos((2*\k-1)*pi/4 r)},{-\z+3*sin(7*pi/6 r)+0.5*sin((2*\k-1)*pi/4 r});
   \draw[fill=black] (D5_\k) circle (2pt);
   \coordinate (D6_\k) at ({\y+3*cos(5*pi/6 r)+0.5*cos((2*\k-1)*pi/4 r)},{-\z+3*sin(5*pi/6 r)+0.5*sin((2*\k-1)*pi/4 r});
   \draw[fill=black] (D6_\k) circle (2pt);
   }

\draw[thick] (D1_1)--(D1_2)--(D1_3)--(D1_1);
\draw[thick] (D2_1)--(D2_2)--(D2_3)--(D2_1);
\draw[thick] (D3_1)--(D3_2)--(D3_3)--(D3_4)--(D3_5)--(D3_1);
\draw[thick] (D4_1)--(D4_2)--(D4_3)--(D4_4)--(D4_5)--(D4_1);
\draw[thick] (D5_1)--(D5_2)--(D5_3)--(D5_4)--(D5_1)--(D5_3)--(D5_2)--(D5_4);
\draw[thick] (D6_1)--(D6_2)--(D6_3)--(D6_4)--(D6_1)--(D6_3)--(D6_2)--(D6_4);

\draw[name path=d1,thick] (D3_1)--(D3_3);
\draw[name path=d2,thick] (D3_2)--(D3_4);
\draw[name path=d3,thick] (D3_3)--(D3_5);
\draw[name path=d4,thick] (D3_4)--(D3_1);
\draw[name path=d5,thick] (D3_5)--(D3_2);
\path[name intersections={of=d1 and d2,by=d6}];
\path[name intersections={of=d2 and d3,by=d7}];
\path[name intersections={of=d3 and d4,by=d8}];
\path[name intersections={of=d4 and d5,by=d9}];
\path[name intersections={of=d5 and d1,by=d10}];
\draw[fill=black] (d6) circle (2pt);
\draw[fill=black] (d7) circle (2pt);
\draw[fill=black] (d8) circle (2pt);
\draw[fill=black] (d9) circle (2pt);
\draw[fill=black] (d10) circle (2pt);
\draw[thick] (D_3)--(d6);
\draw[thick] (D_3)--(d7);
\draw[thick] (D_3)--(d8);
\draw[thick] (D_3)--(d9);
\draw[thick] (D_3)--(d10);

\draw[name path=d11,thick] (D4_1)--(D4_3);
\draw[name path=d12,thick] (D4_2)--(D4_4);
\draw[name path=d13,thick] (D4_3)--(D4_5);
\draw[name path=d14,thick] (D4_4)--(D4_1);
\draw[name path=d15,thick] (D4_5)--(D4_2);
\path[name intersections={of=d11 and d12,by=d16}];
\path[name intersections={of=d12 and d13,by=d17}];
\path[name intersections={of=d13 and d14,by=d18}];
\path[name intersections={of=d14 and d15,by=d19}];
\path[name intersections={of=d15 and d11,by=d20}];
\draw[fill=black] (d16) circle (2pt);
\draw[fill=black] (d17) circle (2pt);
\draw[fill=black] (d18) circle (2pt);
\draw[fill=black] (d19) circle (2pt);
\draw[fill=black] (d20) circle (2pt);
\draw[thick] (D_4)--(d16);
\draw[thick] (D_4)--(d17);
\draw[thick] (D_4)--(d18);
\draw[thick] (D_4)--(d19);
\draw[thick] (D_4)--(d20);

\draw[thick] (D1_1)--++(0,1);
\draw[thick] (D1_2) to [out=120, in=150] (D6_3);
\draw[thick] (D1_3)--(D2_3);
\draw[thick] (D2_1)--++({sqrt(3)/2},0.5);
\draw[thick] (D2_2)--(D3_1);
\draw[thick] (D3_2)--++(-1,0);
\draw[thick] (D3_3)--(D4_2);
\draw[thick] (D3_4)--++({sqrt(3)/2},-0.5);
\draw[thick] (D3_5)--++(1,0); 
\draw[thick] (D4_1)--(D5_4);
\draw[thick] (D4_3)--++({-sqrt(3)/2},-0.5);
\draw[thick] (D4_4)--++(0,-1);
\draw[thick] (D4_5)--++({sqrt(3)/2},-0.5);
\draw[thick] (D5_1)--++({sqrt(2)/2},{sqrt(2)/2});
\draw[thick] (D5_2) to [bend left] (D6_2);
\draw[thick] (D5_3)--++({-sqrt(2)/2},{-sqrt(2)/2});
\draw[thick] (D6_1)--++(1,0);
\draw[thick] (D6_4)--++({sqrt(3)/2},-0.5);

\end{tikzpicture}
 \caption{Canonical inflation $G_{\textrm{infl.}}$ of $G$ and the graphs $G_{\textrm{infl.}}^{+}$ and $(G_{\rho})_{\textrm{infl.}}^{+}$ obtained from $G_{\textrm{infl.}}$ by attaching gadgets and a crossing operation.}
 \label{fig:2}
\end{figure}
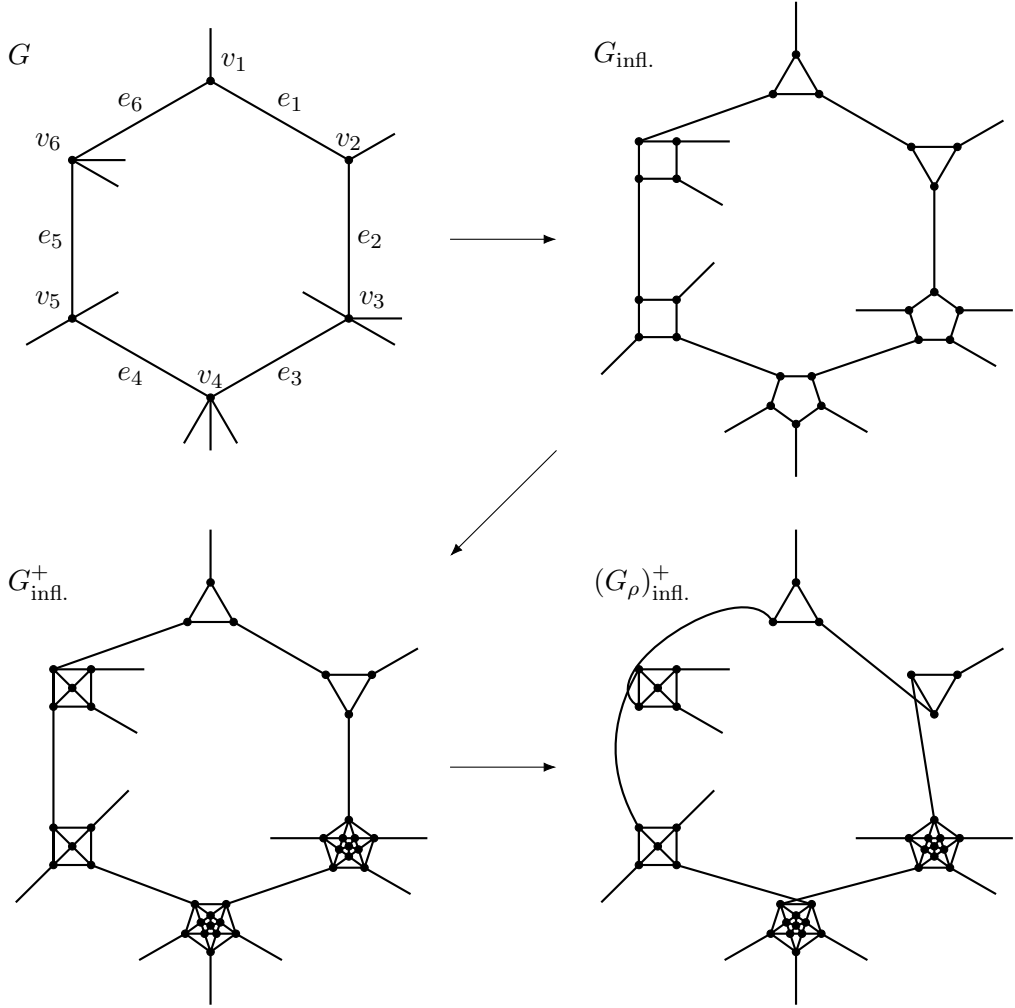

\section{Proofs of the main theorems}
\label{sec:proof}
In this section, we prove Theorems \ref{thm1}--\ref{thm4}. 

\begin{proof}[Proof of Theorem \ref{thm1}]

Let $G$ be a $4$-edge-connected $1$-plane graph. 
Let $H$ be a spanning plane subgraph of $G$ such that the number of components of $H$ is minimum. 
Suppose, for the sake of contradiction, that let $C_1, C_2, \ldots, C_l$ be the components of $H$ where $l \ge 2$. 
For each component $C_i$ of $H$, make $\tilde{C}_i$ by adding as many as edges 
of $G$ with the following conditions: 

\begin{itemize}
\item[(i)]
$V(\tilde{C}_i) = V(C_i)$. 

\item[(ii)]
If $i\ne j$, then any edge of $\tilde{C}_i$ does not cross
with any edge of $\tilde{C}_j$.

\item[(iii)]
If $e$ is a noncut edge of $\tilde{C}_i$, namely, 
$e \in E(\tilde{C}_i)$ with $\tilde{C}_i-e$ being connected, then 
$\tilde{C}_i-e$ contains a connected plane subgraph that spans $V(\tilde{C}_i)$. 
\end{itemize}

Here note that $C_i$ itself satisfies these three conditions. 
Let $\tilde{H}$ be $\bigcup_{1\le i\le l} \tilde{C_i}$. 
Let $A_i \subseteq E(\tilde{C}_i)$ be the set of all cut edges of $\tilde{C}_i$. 

In the following argument, we refer to
any component of $\bigcup_{i=1}^l \tilde{C}_i\setminus A_i$ as a {\em block} of $\tilde{H}$. 

\medskip\noindent
{\bf Claim 1}. For each edge $uv$ of $E(G)\setminus E(\tilde{H})$ that connects different
blocks of $\tilde{H}$, $uv$ crosses with a cut edge of some $\tilde{C}_i$.
\medskip

{\it Proof}.
Let $uv$ be an edge of $E(G)\setminus E(\tilde{H})$ such that $u$ and $v$ are
contained in different blocks $B_u$ and $B_v$, respectively.

\medskip
Case 1: $B_u\subseteq \tilde{C}_i$ and $B_v\subseteq \tilde{C}_j$ with $i\ne j$.

If $uv$ does not cross with any edge of $\tilde{H}$, then we can add the edge $uv$ to $H$
to obtain a plane subgraph of $G$ with $l-1$ components,
which contradicts the minimality of the number of components of $H$.
Suppose that $uv$ crosses with an edge $xy\in E(\tilde{H})$, say $xy\in E(\tilde{C}_k)$,
(possibly $k\in\{i, j\}$).
We may assume $\tilde{C}_k - xy$ is connected.
By the condition~(iii) of $\tilde{C}_k$, we have a plane subgraph $C'_k\subseteq \tilde{C}_k-xy$
which spans $V(\tilde{C}_k)=V(C_k)$.
Then, by replacing $C_k$ with $C'_k$ in $H$, and by adding the edge $uv$,
we obtain a plane subgraph of $G$ consisting of $l-1$ components,
a contradiction.

\medskip
Case 2: $B_u$ and $B_v$ are in a same component, say $\tilde{C}_i$, of $\tilde{H}$.

Suppose first that $uv$ does not cross with any edge of $\tilde{H}$.
Then, we claim that $\tilde{C}'_i=\tilde{C}_i+uv$ satisfies the condition~(iii).
Let $e$ be a noncut edge of $\tilde{C}'_i$.
If $e$ is a noncut edge of $\tilde{C}_i$ or $e=uv$, then
the condition~(iii) for $\tilde{C}_i$ immediately implies the existence of
a desired plane subgraph of $\tilde{C}'_i-e$.
Consider the case where $e$ is a cut edge of $\tilde{C}_i$ but
a noncut edge of $\tilde{C}'_i$.
Note that in this case $u$ and $v$ are in the different components of $\tilde{C}_i-e$.
Then, taking any connected plane subgraph $C'_i$ of $\tilde{C}_i$ with $V(C'_i)=V(\tilde{C}_i)$,
we obtain a desired plane subgraph $C'_i+uv-e$ of $\tilde{C}'_i$.
Thus, $\tilde{C}'_i$ satisfies the condition~(iii),
which contradicts the maximality of $\tilde{C_i}$.

Next suppose that $uv$ crosses with an edge $xy$ of $\tilde{H}$, say $xy\in E(\tilde{C}_k)$.
If $xy$ is a cut edge of $\tilde{C}_k$, then we are done.
Suppose $xy$ is a noncut edge.
If $k\ne i$, then this cannot occur, namely, the edge $xy$ would be a cut edge of $\tilde{C}_k$. 
For otherwise, there exists an $xy$-path of $\tilde{C}_k-xy$,
while $x$ and $y$ are inside and outside of a cycle $D$ of $\tilde{C}'_i$ containing $uv$,
which contradicts the condition~(ii) for $\tilde{C}_i$ and $\tilde{C}_k$.
So we may assume that $k=i$. 
Let $\tilde{C}'_i=\tilde{C}_i+uv$.
We claim that $\tilde{C}'_i$ satisfies the condition~(iii).
If $e$ is a noncut edge of $\tilde{C}_i$ or $e=uv$, then
the condition~(iii) for $\tilde{C}_i$ immediately implies the existence of
a desired plane subgraph of $\tilde{C}'_i-e$.
Consider the case where $e$ is a cut edge of $\tilde{C}_i$ but
a noncut edge of $\tilde{C}'_i$.
Note that in this case $u$ and $v$ are in the different components of $\tilde{C}_i-e$.
Since $xy$ is a noncut edge of $\tilde{C}_i$, we can
take a connected plane subgraph $C'_i$ of $\tilde{C}_i$ with $V(C'_i)=V(\tilde{C}_i)$ avoiding $xy$ by the condition~(iii). 
Then, we obtain a desired connected plane subgraph $C'_i+uv-e$ of $\tilde{C}'_i$.
Thus, $\tilde{C}'_i$ satisfies the condition~(iii),
which contradicts the maximality of $\tilde{C_i}$.
\hfill $\square$
\bigskip

For a block $B$ of $\tilde{H}$, let $d(B)$ denote the number of edges in $G$
connecting $V(B)$ and $V(G)\setminus V(B)$.
Then, the number of edges in $G$ connecting different blocks is
$\frac{1}{2}\sum_{B:{\rm blocks}}d(B)$.
Among those edges, $\sum_{i=1}^l |A_i|$ edges are in $\tilde{H}$.
By Claim 1, the number of edges $e$ in $E(G)\setminus E(\tilde{H})$ 
that connects different blocks of $\tilde{H}$ is
less than the number of cut edges in $\tilde{H}$.
Thus,
\[
\frac{1}{2}\sum_{B:{\rm blocks}} d(B) - \sum_{i=1}^l |A_i| \le \sum_{i=1}^l |A_i|, 
\]
which implies
\[
\sum_{B:{\rm blocks}} d(B) \le 4\sum_{i=1}^l |A_i|.
\]
On the other hand, since $G$ is $4$-edge-connected,
$d(B)\ge 4$ for each block $B$, and the number of blocks in $\tilde{H}$
is $\sum_{i=1}^l (|A_i|+1)$. This implies that
\[
\sum_{B:{\rm blocks}} d(B) \ge 4\sum_{i=1}^l (|A_i|+1),
\]
which contradicts the previous inequality.
\hfill
\end{proof}

\begin{proof}[Proof of Theorem \ref{thm2}]
Let $G_{2k}$ ($k\geq 2$) denote a prism of an even cycle $C_{2k}$ (the cartesian product of $K_2$ and $C_{2k}$) embedded on the plane with the outer cycle $u_1 \cdots u_{2k}$ and the inner cycle $v_1 \cdots v_{2k}$ with $2k$ edges $u_iv_i$; that is, $u_iu_{i+1}v_{i+1}v_i$ for $i\in \{1, \ldots, 2k-1\}$ and $u_{2k}u_1v_1v_{2k}$ bound quadrangular faces of $G_{2k}$. 
Since $G_{2k}$ is $3$-edge connected, $(G_{2k})_{\textrm{infl.}}^{+}$ is a $3$-connected plane graph by Propositions~\ref{prop_1-planarity}~and~\ref{prop_k-conn}. 
Next let $((G_{2k})_{\rho})_{\textrm{infl.}}^{+}$ denote the $1$-plane graph obtained from $(G_{2k})_{\textrm{infl.}}^{+}$ 
by a crossing operation along the even cycle $v_1 \cdots v_{2k}$ and the paths $v_iu_iu_{i+1}$ for $i \in \{1, \ldots, 2k\}$, where $u_{2k+1} = u_1$, in $G_{2k}$; as we mentioned, there exists a suitable $\rho$. 
Note that all edges corresponding to original edges of $G_{2k}$ become crossing edges in 
$((G_{2k})_{\rho})_{\textrm{infl.}}^{+}$; see Figure~\ref{fig:4} for example for $k=2$. 
Since the number of crossing edges of $((G_{2k})_{\rho})_{\textrm{infl.}}^{+}$ is $6k$, 
any spanning plane subgraph $F$ of $((G_{2k})_{\rho})_{\textrm{infl.}}^{+}$ 
contains at most $3k$ edges corresponding to edges of $G_{2k}$. 
This implies that the number of edges of $F$ which join different inflated $3$-cycles is at most $3k < 4k-1 = |V(G_{2k})|-1$, that is, $F$ is disconnected. 
\end{proof}

\noindent
\textbf{Remark:} In the proof of Theorem~\ref{thm2}, one can take an arbitrary 3-edge connected cubic plane graph of order at least $8$ instead of $G_{2k}$; for every connected graph $G$, the edge set $E(G)$ can be partitioned into paths of length 2 (and one edge when $|E(G)|$ is odd), see \cite{Bo}.

\begin{figure}[th]
 \centering
\begin{tikzpicture}[scale=0.5]
\draw[fill=black] (1,0) circle (4pt);
\draw[fill=black] (6,0) circle (4pt);
\draw[fill=black] (0,1) circle (4pt);
\draw[fill=black] (1,1) circle (4pt);
\draw[fill=black] (6,1) circle (4pt);
\draw[fill=black] (7,1) circle (4pt);
\draw[fill=black] (2,2) circle (4pt);
\draw[fill=black] (3,2) circle (4pt);
\draw[fill=black] (4,2) circle (4pt);
\draw[fill=black] (5,2) circle (4pt);
\draw[fill=black] (2,3) circle (4pt);
\draw[fill=black] (5,3) circle (4pt);
\draw[fill=black] (2,4) circle (4pt);
\draw[fill=black] (5,4) circle (4pt);
\draw[fill=black] (2,5) circle (4pt);
\draw[fill=black] (3,5) circle (4pt);
\draw[fill=black] (4,5) circle (4pt);
\draw[fill=black] (5,5) circle (4pt);
\draw[fill=black] (0,6) circle (4pt);
\draw[fill=black] (1,6) circle (4pt);
\draw[fill=black] (6,6) circle (4pt);
\draw[fill=black] (7,6) circle (4pt);
\draw[fill=black] (1,7) circle (4pt);
\draw[fill=black] (6,7) circle (4pt);

\draw[thick] (1,0)--(1,1)--(0,1)--(1,0)--(6,1);
\draw[thick] (7,1)--(6,1)--(6,0)--(7,1)--(6,6);
\draw[thick] (6,7)--(6,6)--(7,6)--(6,7)--(1,6);
\draw[thick] (0,6)--(1,6)--(1,7)--(0,6)--(1,1);
\draw[thick] (3,2)--(2,3)--(2,2)--(3,2)--(5,3);
\draw[thick] (4,2)--(5,2)--(5,3)--(4,2)--(5,4);
\draw[thick] (4,5)--(5,4)--(5,5)--(4,5)--(2,4);
\draw[thick] (3,5)--(2,5)--(2,4)--(3,5)--(2,3);
\draw[thick] (0,1)--(2,2);
\draw[thick] (6,0)--(5,2);
\draw[thick] (5,5)--(7,6);
\draw[thick] (1,7)--(2,5);
\end{tikzpicture}
 \caption{1-plane graph $((G_{4})_{\rho})_{\textrm{infl.}}^{+}$ in the proof of Theorem \ref{thm2}.}
 \label{fig:4}
\end{figure}
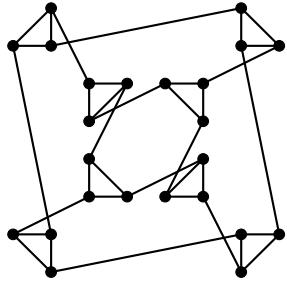

\begin{proof}[Proof of Theorem \ref{thm3}]
Let $G$ be a $3$-edge connected cubic plane graph without hamiltonian cycles. 
In fact, there are infinitely many such graphs; see \cite{HM, Tu2} for example. 
It is shown by Tait's theorem~\cite{Ta} and the \emph{Four Color Theorem}~\cite{AH} that there exists a proper $3$-edge-coloring $c: E(G) \to \{1, 2, 3\}$ of $G$. 
Let $M$ be a perfect matching of $G$ corresponding to the edges colored by $1$, and replace each edge of $M$ with parallel edges; 
call the resulting $4$-regular plane multigraph $G_M$. 
The graph $G_M$ is $4$-edge connected since every $3$-edge cut in $G$ consists of the edges colored by 1, 2, and 3, respectively (by the \emph{Parity Lemma}; see \cite{Is} for example).
Next, for $(G_{M})_{\textrm{infl.}}^{+}$ of $G_M$, 
consider $((G_{M})_{\rho})_{\textrm{infl.}}^{+}$ that is 
a $1$-plane graph obtained from $(G_{M})_{\textrm{infl.}}^{+}$ 
by a crossing operation along 
\begin{itemize}
\item
every pair of parallel edges in $G_M$, and 
\item
all cycles in the $2$-factor (i.e., spanning 2-regular subgraph) $E(G)-M$ of $G_M$, which have even length because the edges are colored $2$ and $3$. 
\end{itemize}

See Figure~\ref{fig:1} for example, however, note that $G$ has a hamiltonian cycle in this example for simplicity. 
Then $((G_{M})_{\rho})_{\textrm{infl.}}^{+}$ is a $4$-connected $4$-regular $1$-plane graph by Propositions~\ref{prop_1-planarity}~and~\ref{prop_k-conn}. 
Let $F$ be a spanning plane subgraph of $((G_{M})_{\rho})_{\textrm{infl.}}^{+}$. 
Suppose, for sake of contradiction, that $F$ is $2$-connected. 
For every $4$-gadget corresponding to a vertex $v$ in $G_M$, at least $2$ edges of $e_1, e_2, e_3, e_4$, which are incident to $v$ in $G_M$, must be contained in $F$; 
in fact, exactly $2$ edges can be contained in $F$ by construction. 
Then, $F$ corresponds to a $2$-factor of $G$. 
Since $G$ does not have a hamiltonian cycle, $F$ cannot be connected, which is a contradiction. 
\end{proof}

\begin{figure}[ht]
\centering
\begin{tikzpicture}[scale=0.7]
\def\y{9}
\coordinate [label=right: {$G$}] (fig:1) at (-3,2.8);
\coordinate [label=right: {$G_M$}] (fig:2) at (\y-3,2.8);
\coordinate [label=right: {$(G_{M})_{\textrm{infl.}}^{+}$}] (fig:3) at (-3.5,-4);
\coordinate [label=right: {$((G_{M})_{\rho})_{\textrm{infl.}}^{+}$}] (fig:4) at (\y-4,-4);
\coordinate [label=left: {$1$}] (1_1) at (0,1.5);
\coordinate [label=above: {$1$}] (1_2) at (0,-0.5);
\coordinate [label=above: {$1$}] (1_3) at (0,-1.5);
\coordinate [label=center: {$2$}] (2_1) at (-1.6,1);
\coordinate [label=center: {$2$}] (2_2) at (-0.6,0.5);
\coordinate [label=center: {$2$}] (2_3) at (1.6,-0.6);
\coordinate [label=center: {$3$}] (3_1) at (1.6,1);
\coordinate [label=center: {$3$}] (3_2) at (0.6,0.5);
\coordinate [label=center: {$3$}] (3_3) at (-1.6,-0.6);

\draw [->, >=Latex] (3.5,0) -- (5.5,0);
\draw [->, >=Latex] (5.5,-2.5) -- (3.5,-4.5);
\draw [->, >=Latex] (3.5,-7) -- (5.5,-7);

\foreach \k in{1,...,3}
   {
   \coordinate (A_\k) at ({3*cos((4*\k-1)*pi/6 r)},{3*sin((4*\k-1)*pi/6 r)});
   \coordinate (B_\k) at ({cos((4*\k-1)*pi/6 r)},{sin((4*\k-1)*pi/6 r)});
   \draw[fill=black] (A_\k) circle (2pt);
   \draw[fill=black] (B_\k) circle (2pt);
   }

\draw[thick] (A_1)--(A_2)--(A_3)--(A_1)--(B_1)--(B_2)--(B_3)--(B_1);
\draw[thick] (A_2)--(B_2);
\draw[thick] (A_3)--(B_3);

\foreach \k in{1,...,3}
   {
   \coordinate (C_\k) at ({\y+3*cos((4*\k-1)*pi/6 r)},{3*sin((4*\k-1)*pi/6 r)});
   \coordinate (D_\k) at ({\y+cos((4*\k-1)*pi/6 r)},{sin((4*\k-1)*pi/6 r)});
   \draw[fill=black] (C_\k) circle (2pt);
   \draw[fill=black] (D_\k) circle (2pt);
   }

\draw[thick] (C_1)--(C_2)--(D_2)--(D_1)--(D_3)--(C_3)--(C_1);
\draw[thick] (C_1) to [out=290, in=70] (D_1);
\draw[thick] (C_1) to [out=250, in=110] (D_1);
\draw[thick] (D_2) to [bend left] (D_3);
\draw[thick] (D_2) to [bend right] (D_3);
\draw[thick] (C_2) to [out=10, in=170] (C_3);
\draw[thick] (C_2) to [out=350, in=190] (C_3);

\def\x{0.3}
\foreach \k in{1,...,3}
   {
   \coordinate (E_\k) at ({3*cos((4*\k-1)*pi/6 r)},{-7+3*sin((4*\k-1)*pi/6 r)});
   \coordinate (E1_\k) at ({\x+3*cos((4*\k-1)*pi/6 r)},{-7+\x+3*sin((4*\k-1)*pi/6 r)});
   \coordinate (E2_\k) at ({-\x+3*cos((4*\k-1)*pi/6 r)},{-7+\x+3*sin((4*\k-1)*pi/6 r)});
   \coordinate (E3_\k) at ({-\x+3*cos((4*\k-1)*pi/6 r)},{-7-\x+3*sin((4*\k-1)*pi/6 r)});
   \coordinate (E4_\k) at ({\x+3*cos((4*\k-1)*pi/6 r)},{-7-\x+3*sin((4*\k-1)*pi/6 r)});
   \coordinate (F_\k) at ({cos((4*\k-1)*pi/6 r)},{-7+sin((4*\k-1)*pi/6 r)});
   \coordinate (F1_\k) at ({\x+cos((4*\k-1)*pi/6 r)},{-7+\x+sin((4*\k-1)*pi/6 r)});
   \coordinate (F2_\k) at ({-\x+cos((4*\k-1)*pi/6 r)},{-7+\x+sin((4*\k-1)*pi/6 r)});
   \coordinate (F3_\k) at ({-\x+cos((4*\k-1)*pi/6 r)},{-7-\x+sin((4*\k-1)*pi/6 r)});
   \coordinate (F4_\k) at ({\x+cos((4*\k-1)*pi/6 r)},{-7-\x+sin((4*\k-1)*pi/6 r)});
   \draw[fill=black] (E_\k) circle (2pt);
   \draw[fill=black] (E1_\k) circle (2pt);
   \draw[fill=black] (E2_\k) circle (2pt);
   \draw[fill=black] (E3_\k) circle (2pt);
   \draw[fill=black] (E4_\k) circle (2pt);
   \draw[fill=black] (F_\k) circle (2pt);
   \draw[fill=black] (F1_\k) circle (2pt);
   \draw[fill=black] (F2_\k) circle (2pt);
   \draw[fill=black] (F3_\k) circle (2pt);
   \draw[fill=black] (F4_\k) circle (2pt);
   \draw[thick] (E1_\k)--(E2_\k)--(E3_\k)--(E4_\k)--(E1_\k)--(E3_\k)--(E2_\k)--(E4_\k);
   \draw[thick] (F1_\k)--(F2_\k)--(F3_\k)--(F4_\k)--(F1_\k)--(F3_\k)--(F2_\k)--(F4_\k);
   }
\draw[thick] (E1_1)--(E1_3);
\draw[thick] (E2_1)--(E2_2);
\draw[thick] (E3_1)--(F2_1);
\draw[thick] (E4_1)--(F1_1);
\draw[thick] (F3_1)--(F2_2);
\draw[thick] (F4_1)--(F1_3);
\draw[thick] (F1_2)--(F2_3);
\draw[thick] (F4_2)--(F3_3);
\draw[thick] (E1_2)--(F3_2);
\draw[thick] (E2_3)--(F4_3);
\draw[thick] (E4_2)--(E3_3);
\draw[thick] (E3_2) to [out=345, in=195] (E4_3);

\draw[thick] (E1_1)--(E1_3);
\draw[thick] (E2_1)--(E2_2);
\draw[thick] (E3_1)--(F2_1);
\draw[thick] (E4_1)--(F1_1);
\draw[thick] (F3_1)--(F2_2);
\draw[thick] (F4_1)--(F1_3);
\draw[thick] (F1_2)--(F2_3);
\draw[thick] (F4_2)--(F3_3);
\draw[thick] (E1_2)--(F3_2);
\draw[thick] (E2_3)--(F4_3);
\draw[thick] (E4_2)--(E3_3);
\draw[thick] (E3_2) to [out=345, in=195] (E4_3);

\foreach \k in{1,...,3}
   {
   \coordinate (E'_\k) at ({\y+3*cos((4*\k-1)*pi/6 r)},{-7+3*sin((4*\k-1)*pi/6 r)});
   \coordinate (E1'_\k) at ({\y+\x+3*cos((4*\k-1)*pi/6 r)},{-7+\x+3*sin((4*\k-1)*pi/6 r)});
   \coordinate (E2'_\k) at ({\y-\x+3*cos((4*\k-1)*pi/6 r)},{-7+\x+3*sin((4*\k-1)*pi/6 r)});
   \coordinate (E3'_\k) at ({\y-\x+3*cos((4*\k-1)*pi/6 r)},{-7-\x+3*sin((4*\k-1)*pi/6 r)});
   \coordinate (E4'_\k) at ({\y+\x+3*cos((4*\k-1)*pi/6 r)},{-7-\x+3*sin((4*\k-1)*pi/6 r)});
   \coordinate (F'_\k) at ({\y+cos((4*\k-1)*pi/6 r)},{-7+sin((4*\k-1)*pi/6 r)});
   \coordinate (F1'_\k) at ({\y+\x+cos((4*\k-1)*pi/6 r)},{-7+\x+sin((4*\k-1)*pi/6 r)});
   \coordinate (F2'_\k) at ({\y-\x+cos((4*\k-1)*pi/6 r)},{-7+\x+sin((4*\k-1)*pi/6 r)});
   \coordinate (F3'_\k) at ({\y-\x+cos((4*\k-1)*pi/6 r)},{-7-\x+sin((4*\k-1)*pi/6 r)});
   \coordinate (F4'_\k) at ({\y+\x+cos((4*\k-1)*pi/6 r)},{-7-\x+sin((4*\k-1)*pi/6 r)});
   \draw[fill=black] (E'_\k) circle (2pt);
   \draw[fill=black] (E1'_\k) circle (2pt);
   \draw[fill=black] (E2'_\k) circle (2pt);
   \draw[fill=black] (E3'_\k) circle (2pt);
   \draw[fill=black] (E4'_\k) circle (2pt);
   \draw[fill=black] (F'_\k) circle (2pt);
   \draw[fill=black] (F1'_\k) circle (2pt);
   \draw[fill=black] (F2'_\k) circle (2pt);
   \draw[fill=black] (F3'_\k) circle (2pt);
   \draw[fill=black] (F4'_\k) circle (2pt);
   \draw[thick] (E1'_\k)--(E2'_\k)--(E3'_\k)--(E4'_\k)--(E1'_\k)--(E3'_\k)--(E2'_\k)--(E4'_\k);
   \draw[thick] (F1'_\k)--(F2'_\k)--(F3'_\k)--(F4'_\k)--(F1'_\k)--(F3'_\k)--(F2'_\k)--(F4'_\k);
   }

\draw[thick] (E1'_1)--(E2'_3);
\draw[thick] (E2'_1)--(E1'_2);
\draw[thick] (E3'_1)--(F1'_1);
\draw[thick] (E4'_1)--(F2'_1);
\draw[thick] (F3'_1)--(F1'_3);
\draw[thick] (F4'_1)--(F2'_2);
\draw[thick] (F1'_2)--(F3'_3);
\draw[thick] (F4'_2)--(F2'_3);
\draw[thick] (E2'_2)--(F3'_2);
\draw[thick] (E1'_3)--(F4'_3);
\draw[thick] (E4'_2) to [out=345, in=195] (E4'_3);
\draw[thick] (E3'_2) to [out=345, in=195] (E3'_3);
\end{tikzpicture}
 \caption{$4$-connected $4$-regular 1-plane graph $((G_{M})_{\rho})_{\textrm{infl.}}^{+}$ obtained from a $3$-edge connected cubic plane graph $G$.}
 \label{fig:1}
\end{figure}
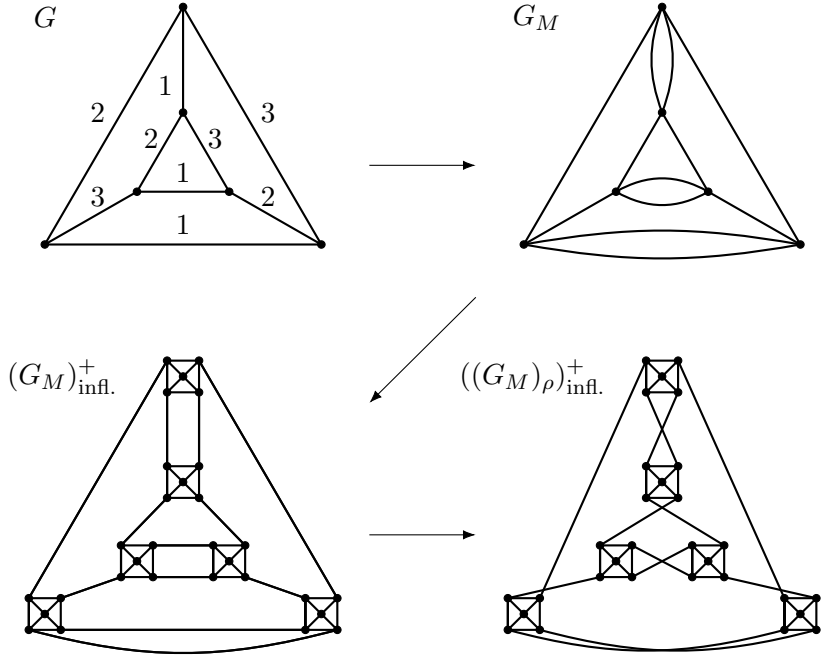

\begin{proof}[Proof of Theorem \ref{thm4}]
Let $k \ge 2$ be an integer. 
Let $C_{2k} = v_1e_1v_2e_2 \cdots v_{2k}e_{2k}v_1$ be an even cycle with a plane embedding. 
Replace each edge $e_i$ with three parallel edges $e_{i1}, e_{i2}, e_{i3}$ for $i\in \{1 \le i \le 2k\}$ so that $e_{11}, \ldots, e_{(2k)1}$ bounds an inner face; call the resulting $6$-edge-connected $6$-regular plane multigraph $I_{2k}$. 
Take the canonical inflation of $I_{2k}$ and attach an $6$-gadget to each inflated $6$-cycle to obtain $(I_{2k})_{\textrm{infl.}}^{+}$. 
Now, consider a $1$-plane graph 
$((I_{2k})_{\rho})_{\textrm{infl.}}^{+}$ obtained from $(I_{2k})_{\textrm{infl.}}^{+}$ by a crossing operation along
\begin{itemize}
\item
every pair of parallel edges $e_{i2}$ and $e_{i3}$ in $I_{2k}$ ($i \in \{1, \ldots, 2k\}$), and 
\item
the $2k$-cycle $v_1e_1v_2e_2\cdots v_{2k}e_{2k}v_1$ in $I_{2k}$. 
\end{itemize}

See Figure~\ref{fig:3} for example for $k=3$. 
The resulting graph $((I_{2k})_{\rho})_{\textrm{infl.}}^{+}$ is a $6$-connected $6$-regular $1$-plane graph by Propositions~\ref{prop_1-planarity}~and~\ref{prop_k-conn}. 
Let $F$ be a spanning plane subgraph of $((I_{2k})_{\rho})_{\textrm{infl.}}^{+}$. 
On the $6$-gadget corresponding to $v_2$ in $I_{2k}$, at most $3$ edges of $e_{11}, e_{12}, e_{13}, e_{21}, e_{22}$, $e_{23}$, which are incident to $v_2$ in $I_{2k}$, is contained in $F$ by construction. 
So is the $6$-gadget corresponding to $v_4$, too. 
Thus, there exists a $2$-edge cut in $F$, which corresponds to $\{e_s, e_t\}$ in $C_{2k}$ for some $s\in\{1,2\}$ and some $t\in\{3,4\}$, and $F$ is not $3$-connected. 
\end{proof}

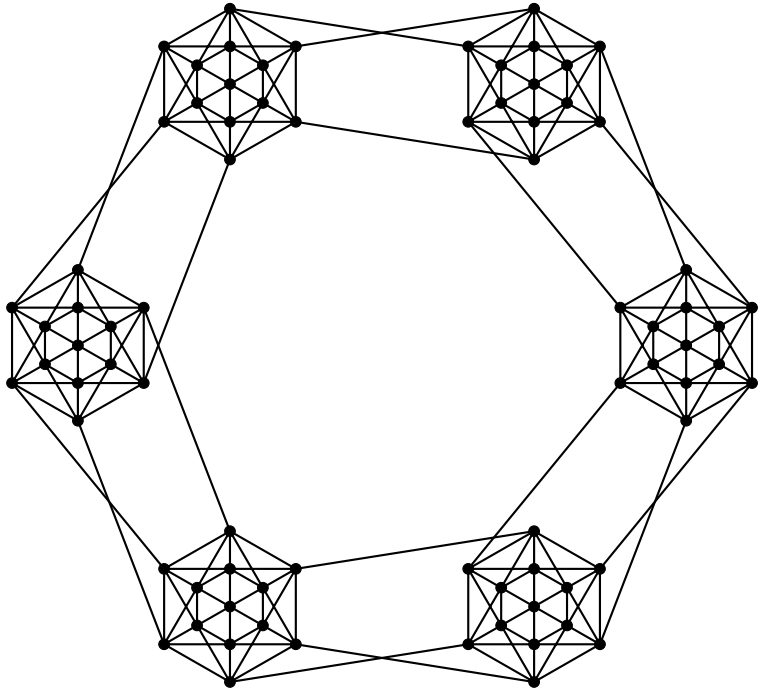
\begin{figure}[th]
\centering
\begin{tikzpicture}
\foreach \k in{0,...,5}
   {
\coordinate (a_\k) at ({4*cos(\k*pi/3 r)+0},{4*sin(\k*pi/3 r)+0});
\coordinate (b_\k) at ({4*cos(\k*pi/3 r)+0},{4*sin(\k*pi/3 r)+1});
\coordinate (c_\k) at ({4*cos(\k*pi/3 r)+cos(5*pi/6 r)},{4*sin(\k*pi/3 r)+sin(5*pi/6 r)});
\coordinate (d_\k) at ({4*cos(\k*pi/3 r)+cos(7*pi/6 r)},{4*sin(\k*pi/3 r)+sin(7*pi/6 r)});
\coordinate (e_\k) at ({4*cos(\k*pi/3 r)+0},{4*sin(\k*pi/3 r)-1});
\coordinate (f_\k) at ({4*cos(\k*pi/3 r)+cos(11*pi/6 r)},{4*sin(\k*pi/3 r)+sin(11*pi/6 r)});
\coordinate (g_\k) at ({4*cos(\k*pi/3 r)+cos(13*pi/6 r)},{4*sin(\k*pi/3 r)+sin(13*pi/6 r)});
\coordinate (b'_\k) at ({4*cos(\k*pi/3 r)+0},{4*sin(\k*pi/3 r)+0.5});
\coordinate (c'_\k) at ({4*cos(\k*pi/3 r)+0.5*cos(5*pi/6 r)},{4*sin(\k*pi/3 r)+0.5*sin(5*pi/6 r)});
\coordinate (d'_\k) at ({4*cos(\k*pi/3 r)+0.5*cos(7*pi/6 r)},{4*sin(\k*pi/3 r)+0.5*sin(7*pi/6 r)});
\coordinate (e'_\k) at ({4*cos(\k*pi/3 r)+0},{4*sin(\k*pi/3 r)-0.5});
\coordinate (f'_\k) at ({4*cos(\k*pi/3 r)+0.5*cos(11*pi/6 r)},{4*sin(\k*pi/3 r)+0.5*sin(11*pi/6 r)});
\coordinate (g'_\k) at ({4*cos(\k*pi/3 r)+0.5*cos(13*pi/6 r)},{4*sin(\k*pi/3 r)+0.5*sin(13*pi/6 r)});
\draw[fill=black] (a_\k) circle (2pt);
\draw[fill=black] (b_\k) circle (2pt);
\draw[fill=black] (c_\k) circle (2pt);
\draw[fill=black] (d_\k) circle (2pt);
\draw[fill=black] (e_\k) circle (2pt);
\draw[fill=black] (f_\k) circle (2pt);
\draw[fill=black] (g_\k) circle (2pt);
\draw[fill=black] (b'_\k) circle (2pt);
\draw[fill=black] (c'_\k) circle (2pt);
\draw[fill=black] (d'_\k) circle (2pt);
\draw[fill=black] (e'_\k) circle (2pt);
\draw[fill=black] (f'_\k) circle (2pt);
\draw[fill=black] (g'_\k) circle (2pt);
\draw[thick] (b_\k)--(c_\k)--(d_\k)--(e_\k)--(f_\k)--(g_\k)--(b_\k);
\draw[thick] (b_\k)--(d_\k)--(f_\k)--(b_\k)--(e_\k)--(g_\k)--(c_\k)--(e_\k);
\draw[thick] (c_\k)--(f_\k);
\draw[thick] (d_\k)--(g_\k);
\draw[thick] (b'_\k)--(c'_\k)--(d'_\k)--(e'_\k)--(f'_\k)--(g'_\k)--(b'_\k);
   }

\draw[thick] (b_0)--(g_1);
\draw[thick] (g_0)--(f_1);
\draw[thick] (b_1)--(g_2);
\draw[thick] (c_1)--(b_2);
\draw[thick] (c_2)--(b_3);
\draw[thick] (d_2)--(c_3);
\draw[thick] (d_3)--(c_4);
\draw[thick] (e_3)--(d_4);
\draw[thick] (e_4)--(d_5);
\draw[thick] (f_4)--(e_5);
\draw[thick] (f_5)--(e_0);
\draw[thick] (g_5)--(f_0);

\draw[thick] (c_0)--(d_1);
\draw[thick] (e_1)--(f_2);
\draw[thick] (e_2)--(f_3);
\draw[thick] (g_3)--(b_4);
\draw[thick] (g_4)--(b_5);
\draw[thick] (c_5)--(d_0);
\end{tikzpicture}
 \caption{$6$-connected $6$-regular $1$-plane graph $((I_{6})_{\rho})_{\textrm{infl.}}^{+}$ in the proof of Theorem~\ref{thm4}.}
 \label{fig:3}
\end{figure}

\section{Concluding remarks}
We consider the condition of $k$ and $l$ such that every $k$-connected 1-plane graph has an $l$-connected spanning plane subgraph; see Table~\ref{table:1}. 
For the upper bound of $l$ for $k=7$, one can easily construct a $7$-connected 1-plane graph $H$ that has no $4$-connected spanning plane subgraphs as follows. 
For example, such a graph $H$ is obtained from a $7$-edge connected $7$-regular plane multigraph $G$ by applying the canonical inflation, attaching $7$-gadgets, and a suitable crossing operation (e.g. $G$ is obtained from $G_{2k}$ in the proof of Theorem~\ref{thm2} by replacing $u_iu_{i+1}$ with parallel edges, $v_iv_{i+1}$ with parallel edges, and $u_iv_i$ with three parallel edges, respectively, for $i\in\{1, \ldots, 2k\}$). 
\begin{table}[th]
 \caption{Condition of $k$ and $l$}
 \label{table:1}
 \centering
  \begin{tabular}{cc}
   \hline
   $k$ & $l$ \\
   \hline
   1 & 0 \\
   2 & 0 \\
   3 & 0 (Theorem~\ref{thm2}) \\
   4 & 1 (Theorems~\ref{thm1}~and~\ref{thm3}) \\
   5 & 1 or 2 \\
   6 & 1 or 2 (Theorem~\ref{thm4})\\
   7 & 1, 2, or 3 \\
   \hline
  \end{tabular}
\end{table}
Lastly, we leave the following problem.

\begin{prob}
Determine the exact value of $l$ in Table \ref{table:1} when $k \in \{5, 6, 7\}$.
\end{prob}

\section*{Acknowledgements}
Noguchi's work is partially supported by JSPS KAKENHI Grant Number JP21K13831. 
Ota's work is partially supported by JSPS KAKENHI Grant Number JP22K03404. 
Suzuki's work is partially supported by JSPS KAKENHI Grant Number JP23K03196.


\begin{thebibliography}{99}
\bibitem{AH} 
K. Appel and W. Haken, 
Every planar map is four colorable, 
Bull. Amer. Math. Soc. 82 (1976), 711--712.
\bibitem{Bo}
J. A. Bondy, 
Double covers of graphs, 
J. Graph Theory 14 (1990), 259--273.
\bibitem{BM}
J. A. Bondy and U. S. R. Murty, 
Graph theory, 
Graduate Texts in Mathematics, 244. Springer, New York, 2008.
\bibitem{FHMMSZ}
I. Fabrici, J. Harant, T. Madaras, S. Mohr, R. Sot\'{a}k, and C. T. Zamfirescu, 
Long cycles and spanning subgraphs of locally maximal 1-planar graphs, 
J. Graph Theory 95 (2020), 125--137.
\bibitem{FJ}
H. Fleischner and B. Jackson, 
A note concerning some conjectures on cyclically $4$-edge connected $3$-regular graphs, 
Ann. Discrete Math. 41 (1989), 171--177. 
\bibitem{GT}
J. L. Gross and T. W. Tucker, 
Topological Graph Theory, 
A Wiley-Interscience Publication. John Wiley $\&$ Sons, Inc., New York, 1987.
\bibitem{HM}
D. A. Holton and B. D. McKay, 
The smallest non-Hamiltonian $3$-connected cubic planar graphs have $38$ vertices, 
J. Combin. Theory Ser. B 45 (1988), 305--319.
\bibitem{HMS}
D. Hud\'{a}k, T. Madaras, and Y. Suzuki, 
On properties of maximal $1$-planar graphs, 
Discuss. Math. Graph Theory 32 (2012), 737--747. 
\bibitem{Is}
R. Isaacs, 
Infinite families of non-trivial trivalent graphs which are not Tait colorable, 
Amer. Math. Monthly 82 (1975), 221--239.
\bibitem{MOS}
P. Mutzel, T. Odenthal, and M Scharbrodt, 
The thickness of graphs: a survey, 
Graphs Combin. 14 (1998), 59--73.
\bibitem{MT}
B. Mohar and C. Thomassen, 
Graphs on Surfaces, 
Johns Hopkins University Press, Baltimore, MD, 2001.
\bibitem{NS}
K. Noguchi and Y. Suzuki, 
Relationship among triangulations, quadrangulations and optimal $1$-planar graphs, 
Graphs and Combin. 31 (2015), 1965--1972. 
\bibitem{PT}
J. Pach and G. T\'{o}th, 
Graphs drawn with few crossinqs per edge, 
Combinatorica 17 (1997), 427--439. 
\bibitem{Su}
Y. Suzuki, 
Re-embeddings of maximum $1$-planar graphs, 
SIAM J. Discrete Math. 24 (2010), 1527--1540.
\bibitem{Ta}
P. G. Tait, 
Remarks on the colouring of maps, 
Proc. R. Soc. Edinb. 10 (1880), 729. 
\bibitem{Tu1}
W. T. Tutte, 
A theorem on planar graphs, 
Trans. Amer. Math. Soc. 82 (1956), 99--116.
\bibitem{Tu2}
W. T. Tutte, 
On Hamiltonian circuits, 
J. London Mathematical Society, 21 (1946), 98--101. 
\end{thebibliography}
\end{document}